\documentclass{amsart}

\usepackage{amssymb}
\usepackage{graphicx}
\usepackage{color}
\newtheorem{theorem}{Theorem}[section]
\newtheorem{lemma}[theorem]{Lemma}

\newtheorem{corollary}[theorem]{Corollary}
\theoremstyle{definition}

\newtheorem{example}[theorem]{Example}
\newtheorem{proposition}[theorem]{Proposition}
\theoremstyle{remark}
\newtheorem{remark}[theorem]{Remark}

\numberwithin{equation}{section}

\begin{document}
\title[Perturbations of vector fields on the torus]{Global hypoellipticity for perturbations of complex vector fields on the torus}


\author{M. V. Bartmeyer}
\address{\small Departamento de Matem\'atica, Universidade Estadual de Maring\'a, Av. Colombo, 5790, Maring\'a, 87020-900, PR, Brazil.
}
\email{pg55473@uem.br}
\thanks{The first author was supported by the Coordenação de Aperfeiçoamento de Pessoal de Nível Superior – Brazil (CAPES) – Finance Code 001}

\author{P. L. Dattori da Silva}
\address{\small Departamento de Matem\'atica, Instituto de Ci\^encias Matem\'aticas e de\break Computa\c c\~ao,
Universidade de S\~ao Paulo, Caixa Postal 668, S\~ao Carlos, SP 13560-970, Brazil}
\email{dattori@icmc.usp.br}
\thanks{The second author was supported in part by São Paulo Research Foundation (FAPESP), Brazil (Process Number $\#$2024/08416-6 and $\#$2024/12753-8) and National Council for Scientific and Technological Development – CNPq, Brazil (grant $\#$313581/2021-5)}

\author{R. B. Gonzalez}
\address{\small Departamento de Matem\'atica, Universidade Estadual de Maring\'a, Av. Colombo, 5790, Maring\'a, 87020-900, PR, Brazil.
}
\email{rbgonzalez@uem.br}

\subjclass[2020]{Primary 35F05, 35H10. Secondary 35B10, 35B65, 42B05.}


\begin{abstract}We apply Kr\"{o}necker's approximation theorem to measure (in a topological sense) a set of constants which turn a vector field into a non-globally hypoelliptic operator. We present situations in which this set is a discrete enumerable (hence, meager) subset of the real line, and we also show that this set may be a dense $\mathcal{G}_\delta$ subset of the complex numbers (hence, nonmeager), which produces a contrast to a known result stating that this set has null Lebesgue measure. 
\end{abstract}

\maketitle

\section{Introduction}

One of the main questions in partial differential equations is the regularity of the solutions of a given equation. Considering periodic solutions on a Euclidean space, Fourier series leads us to the study of algebraic equations; consequently, certain approximation theorems of number theoretic nature appear in the study.

Given $n\geq1,$ we consider $\mathbb{T}^n\simeq\mathbb{R}^n/2\pi\mathbb{Z}^n$ the $n-$dimensional torus and we denote by $\mathcal{C}^\infty(\mathbb{T}^n)$ the space of the smooth and complex-valued functions on $\mathbb{T}^n.$ Endowing $\mathcal{C}^\infty(\mathbb{T}^n)$ with the usual metrizable topology, we have a Fr\'{e}chet space whose dual is denoted by $\mathcal{D}'(\mathbb{T}^n),$ which is the usual space of periodic distributions. 

A partial differential operator $P:\mathcal{C}^\infty(\mathbb{T}^n)\rightarrow\mathcal{C}^\infty(\mathbb{T}^n)$ is globally hypoelliptic (GH for short) if the conditions $u\in \mathcal{D}'(\mathbb{T}^n)$ and $Pu\in \mathcal{C}^\infty(\mathbb{T}^n)$ imply that $u\in\mathcal{C}^\infty(\mathbb{T}^n).$ 

As we know from \cite{GW}, the real vector field $D_t+\alpha D_x$ ($\alpha\in\mathbb{R}$) is globally hypoelliptic if and only if $\alpha$ is a non-Liouville irrational number. Since the set of Liouville numbers is a dense $\mathcal{G}_\delta$ (a comeager subset of $\mathbb{R},$ which is nonmeager by Baire's Theorem), it follows that the set of the real numbers $\alpha$ for which $D_t+\alpha D_x$ is globally hypoelliptic is considered a negligible subset.   

Once we know the behavior of a differential operator, a natural question is to analyse which happens with its perturbations. With this in mind, given $\alpha\in\mathbb{C},$ we consider \[\mathcal{N}_1=\{\lambda\in\mathbb{R}; \ \ D_t+\alpha D_x-\lambda \ \ \mbox{is not GH}\}.\]

In \cite{B} the author studied the size of $\mathcal{N}_1$ in a topological sense. Some of the results in \cite{B} are described below.
 
Assume that $\alpha$ is a rational number and write $\alpha=p/q,$ in which $p\in\mathbb{Z},$ $q\in\mathbb{N}$ and $p/q$ is an irreducible fraction (when $\alpha=0$ we set $p=0$ and $q=1$). Recall that the vector field $D_t+(p/q) D_x$ is not globally hypoelliptic (see \cite{GW}). On the other hand, by \cite{B}, it follows that the operator $D_t+(p/q) D_x-\lambda$ is not globally hypoelliptic if and only if $\lambda\in q^{-1}\mathbb{Z}.$ In this situation, $\mathcal{N}_1=\lambda\in q^{-1}\mathbb{Z}$ is an enumerable discrete subset of $\mathbb{R};$ hence, a negligible subset.  Assuming now that $\alpha$ is an irrational number, it follows from \cite{B} (see \cite{B} - Proposition 3.5) that the set $\mathcal{N}_1$ becomes a dense $\mathcal{G}_\delta.$    

A next attempt could be to consider complex perturbations. Given $\alpha\in\mathbb{C},$ we set \[\mathcal{M}_1=\{\lambda\in\mathbb{C}; \ \ D_t+\alpha D_x-\lambda \ \ \mbox{is not GH}\}.\]

Notice that, if $\alpha\in\mathbb{R},$ then $\mathcal{M}_1=\mathcal{N}_1.$ If $\alpha\in\mathbb{C}\setminus\mathbb{R},$ then the vector field $D_t+\alpha D_x$ is GH (indeed, it is elliptic) and $\mathcal{N}_1=\emptyset=\mathcal{M}_1.$ Hence, in dimension two there is no gain when we allow $\lambda$ being a complex number, since $\mathcal{M}_1=\mathcal{N}_1$ in all the situations.    

We summarize the above comments into the following:

\begin{theorem}[see \cite{B}]\label{thm1} Given $\alpha\in\mathbb{C},$ we have $\mathcal{M}_1=\mathcal{N}_1$ and
\begin{itemize}
\item[i)] if $\alpha\in\mathbb{C}\setminus\mathbb{R},$ then $\mathcal{M}_1=\emptyset;$
\item[ii)] if $\alpha\in\mathbb{R}\setminus\mathbb{Q},$ then $\mathcal{M}_1$ is a dense $\mathcal{G}_\delta$ (in particular a comeager set)   
\item[iii)] if $\alpha=p/q$ (either an irreducible fraction or $p=0$ and $q=1$) then $\mathcal{M}_1=q^{-1}\mathbb{Z}$ which is a meager set. 
\end{itemize}
\end{theorem}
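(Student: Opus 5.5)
Via Fourier series: for $u=\sum_{(\tau,\xi)\in\mathbb{Z}^2}\hat u(\tau,\xi)e^{i(t\tau+x\xi)}$, the equation $(D_t+\alpha D_x-\lambda)u=f$ becomes
\[
(\tau+\alpha\xi-\lambda)\,\hat u(\tau,\xi)=\hat f(\tau,\xi).
\]
The standard criterion (as in \cite{GW}) then reads: $P_\lambda=D_t+\alpha D_x-\lambda$ is GH if and only if the symbol $p(\tau,\xi)=\tau+\alpha\xi-\lambda$ vanishes at only finitely many lattice points and there exist $C,N,R>0$ with $|p(\tau,\xi)|\ge C(|\tau|+|\xi|)^{-N}$ for $|\tau|+|\xi|\ge R$. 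For necessity, a sequence of lattice points with $|p|$ decaying faster than any power lets us build a non-smooth distribution $u$ with $P_\lambda u$ smooth; an infinite zero set gives such $u$ even with $P_\lambda u=0$. Sufficiency is immediate from the displayed identity. I would apply this criterion in each case.

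\emph{Case i)}: $\alpha\notin\mathbb{R}$. Write $\alpha=a+ib$ with $b\neq0$ and $\lambda=\lambda_1+i\lambda_2$. Then $|p|\ge|b\xi-\lambda_2|$ and $|p|\ge|\tau+a\xi-\lambda_1|$. If $|\xi|$ is large, the first bound gives $|p|\gtrsim|\xi|$. If $|\xi|$ is bounded, the second gives $|p|\gtrsim|\tau|$. Hence $|p|\ge c(|\tau|+|\xi|)$ outside a finite set, so $P_\lambda$ is GH for every $\lambda$, and $\mathcal{M}_1=\emptyset=\mathcal{N}_1$.

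\emph{Cases ii) and iii)}: $\alpha\in\mathbb{R}$. If $\lambda_2=\Im\lambda\neq0$, then $|p|\ge|\lambda_2|>0$ everywhere, so $P_\lambda$ is GH. Hence $\mathcal{M}_1\subset\mathbb{R}$, which gives $\mathcal{M}_1=\mathcal{N}_1$.

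For $\alpha=p/q$, the values $\tau+\alpha\xi$ range exactly over $q^{-1}\mathbb{Z}$, each attained infinitely often (along $(\tau,\xi)+k(-p,q)$). So:
\begin{itemize}
\item if $\lambda\in q^{-1}\mathbb{Z}$, the symbol has infinitely many zeros and $P_\lambda$ is not GH;
\item otherwise $|p|\ge \operatorname{dist}(\lambda,q^{-1}\mathbb{Z})>0$ and $P_\lambda$ is GH.
\end{itemize}
Thus $\mathcal{M}_1=q^{-1}\mathbb{Z}$, which is closed, discrete and countable, hence meager.

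For irrational $\alpha$, the main obstacle is showing that $\mathcal{N}_1$ is a dense $\mathcal{G}_\delta$. Irrationality makes the map $(\tau,\xi)\mapsto\tau+\alpha\xi$ injective, so there are at most finitely many zeros and only the decay condition matters. Writing $|k|=|\tau|+|\xi|$, I would express
\[
\mathcal{N}_1=\bigcap_{N\ge1}\ \bigcap_{R\ge1} U_{N,R},\qquad U_{N,R}=\bigcup_{|k|\ge R}\{\lambda\in\mathbb{R}:\ |\tau+\alpha\xi-\lambda|<|k|^{-N}\}.
\]
This equality holds because $\lambda\in\mathcal{N}_1$ means that for every $N$ the inequality $|\tau+\alpha\xi-\lambda|<|k|^{-N}$ has infinitely many solutions, i.e.\ solutions with $|k|\ge R$ for every $R$. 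Each $U_{N,R}$ is a union of open intervals, hence open. Each $U_{N,R}$ is also dense, by Kronecker's theorem: $\{\tau+\alpha\xi\}$ is dense in $\mathbb{R}$, and it remains dense after discarding the finitely many points with $|k|<R$, since a set with no isolated points minus a finite set is still dense. So every interval contains some $\tau+\alpha\xi$ with $|k|\ge R$, and hence meets $U_{N,R}$. Baire's theorem then gives that $\mathcal{N}_1$ is a dense $\mathcal{G}_\delta$, and in particular comeager.
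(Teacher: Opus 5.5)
Your proof is correct. The paper gives no proof of Theorem~\ref{thm1} of its own. It cites \cite{B}, justifies i) by ellipticity (zero-order perturbations of an elliptic operator stay GH, Prop.~3.1 of \cite{B}), and for the irrational case relies on the machinery of \cite{B} that it reproduces for $N\ge2$ in Proposition~\ref{propp1} and Lemma~\ref{lem2}.

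Your i) is the hands-on version of the ellipticity argument. Strictly, your case split gives only a uniform positive lower bound for $|p|$ at large $|\tau|+|\xi|$, not the linear bound you state, but a positive lower bound is all the criterion needs. Your iii) is the same computation as in the proof of Proposition~\ref{propp2}.

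The real difference is in ii). The paper's set $\mathcal{M}_{1,\infty}=\bigcap_j\bigcup_{|k|>1}\mathcal{M}(j,\tau,\xi)$ uses a fixed threshold $|k|>1$. So it contains every value $\tau+\alpha\xi$ with $|\tau|+|\xi|>1$, each witnessed by its own exact zero. It therefore overshoots $\mathcal{M}_1$ when $D_t+\alpha D_x$ is GH, i.e.\ when $\alpha$ is non-Liouville. The paper must then invoke Lemma~\ref{lem1}, split according to whether the unperturbed field is GH, and intersect with the co-countable $\mathcal{G}_\delta$ set $(\mathbb{Z}+\alpha\mathbb{Z})^c$, using Baire to retain density.

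Your extra intersection over $R$ encodes ``infinitely many good approximations'' directly. So $\mathcal{N}_1=\bigcap_{N,R}U_{N,R}$ holds exactly, with no Liouville/non-Liouville distinction and no need for Lemma~\ref{lem1}. Baire applied to the open dense sets $U_{N,R}$ then finishes, and the same device would work verbatim for $N\ge2$. What the paper's route buys in exchange is the structural information recorded in Proposition~\ref{propp1}: the lattice $\mathbb{Z}+\alpha\mathbb{Z}$ lies inside $\mathcal{M}_1$ exactly when the unperturbed field fails to be GH, and is disjoint from it otherwise.
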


\begin{remark}\label{rk1}As we may see from $i)-$Theorem \ref{thm1}, the global hypoellipticity of $D_t+\alpha D_x,$ $\alpha\in\mathbb{C}\setminus\mathbb{R},$ is strong enough so that it cannot be affected by a constant perturbation. Indeed, it cannot be affected by a perturbation of order zero (see Proposition 3.1 in \cite{B}). We will show that this phenomenon is a particularity of the dimension two.
\end{remark}

The purpose of this article is to extend the previous result by considering functions which depend on more variables.

We work with complex vector fields of the type 
\[L_N=D_t+\sum_{j=1}^{N}\alpha_jD_{x_j}, \ \ (t,x_1,\ldots,x_N)\in\mathbb{T}^{N+1},\] in which $\alpha_j\in\mathbb{C}.$ For $\lambda\in\mathbb{C}$ we set
\[P_{N,\lambda}=L_N-\lambda.\]

Since we are interested in dimensions higher than two, we assume $N\geq2.$

The sets under study are 
\[\mathcal{N}_N=\{\lambda\in\mathbb{R}; P_{N,\lambda} \ \mbox{is not GH}\}\] and
\[\mathcal{M}_N=\{\lambda\in\mathbb{C}; P_{N,\lambda} \ \mbox{is not GH}\}.\]

Notice that $\mathcal{N}_N\subset\mathcal{M}_N.$ 

By following the approach in \cite{B} we will prove that $\mathcal{M}_N$ is a $\mathcal{G}_\delta$ subset of $\mathbb{C}$ (see Proposition \ref{propp1}). In addition, in Theorem \ref{thm6} we use a version of an approximation result known as Kronecker's approximation theorem (which is a generalization of Dirichlet's approximation theorem) in order to present a characterization to the density of $\mathcal{M}_N$.

The techniques in \cite{GW} allow us to state that $P_{N,\lambda}$ is globally hypoelliptic if and only if the following algebraic condition holds:
\begin{itemize}
\item[] there exist positive constants $C,M,R$ such that \begin{equation}\label{equ1}|\tau+\alpha_1\xi_1+\cdots+\alpha_N\xi_N-\lambda|\geq C|(\tau,\xi_1,\ldots,x_N)|^{-M},\end{equation} for all $(\tau,\xi)\in\mathbb{Z}\times\mathbb{Z}^N$ such that $|(\tau,\xi)|\geq R,$ where $|(\tau,\xi_1,\ldots,x_N)|=|\tau|+|\xi_1|+\cdots+|\xi_N|$.
\end{itemize}

A consequence of the above characterization is that $\mathcal{N}_N=\mathcal{M}_N$ whenever all the coefficients $\alpha_j$ are real numbers. In this case, we will see in Proposition \ref{propp2} that $\mathcal{M}_N$ may be either a discrete enumerable (meager) subset of $\mathbb{R}$ or a dense $\mathcal{G}_\delta$ subset of $\mathbb{R}.$  

In the presence of a coefficient which is not a real number, we obtain a contrast with the two dimensional case. Indeed, we will see that we may lose the global hypoellipticity by adding a perturbation (see Proposition \ref{prop2}). Moreover, in the presence of at least two coefficients which are not real numbers, we stress that we may obtain a dense $\mathcal{G}_\delta$ (nonmeager in the complex plane) set of non-globally hypoelliptic perturbations (see Theorem \ref{thm6} and Example \ref{ex1}; compare to Remark \ref{rk1}). 

It is also worth pointing out that in higher dimensions, the presence of a non-real coefficient implies that $\mathcal{M}_N$ is different from $\mathcal{N}_N$ (see Proposition \ref{prop2} and Theorem \ref{thm6}).

In Section \ref{sec3} we apply these results (concerning constant coefficients vector fields) to measure the size of a set of non-globally hypoelliptic perturbations of a globally hypoelliptic tube-type vector field. We present a condition which yields that this set is a dense $\mathcal{G}_\delta$ subset of the space of smooth periodic functions.

It is an interesting fact that the addition of a term of order zero may destroy the property of global hypoellipticity of operators of principal type, contrary to that happens with the usual (local) hypoellipticity (see \cite{Treves}). It is also interesting the existence of a large set (in a topological sense) of terms of order zero which destroy the global hypoellipticity. This fact is in contrast to Theorem 3 in \cite{BZ} which yields that the Lebesgue measure of $\mathcal{M}_1$ is zero (small in measure sense). Indeed, the Lebesgue measure of $\mathcal{M}_N$ is zero (see Theorem 5.1.9 in \cite{Ng}).

\section{Main results}
We are interested in the global hypoellipticity of \[P_{N,\lambda}=L_N-\lambda,\] in which \[L_N=D_t+\sum_{j=1}^{N}\alpha_jD_{x_j}, \ \ (t,x_1,\ldots,x_N)\in\mathbb{T}^{N+1},\] with $\alpha_j\in\mathbb{C}.$ 

We denote by \[\rho_\lambda(\tau,\xi)=\tau+\alpha_1\xi_1+\cdots+\alpha_N\xi_N-\lambda, \ \ (\tau,\xi)\in\mathbb{Z}\times\mathbb{Z}^N,\] the symbol of the differential operator $P_{N,\lambda}.$ 

Motivated by \cite{B} we define, for $j\in\mathbb{N}$ and $(\tau,\xi)\in\mathbb{Z}\times\mathbb{Z}^N\setminus\{0\},$ 
\[\mathcal{M}(j,\tau,\xi)=\left\{\lambda\in\mathbb{C}; |\rho_\lambda(\tau,\xi)|<(|\xi|+|\tau|)^{-j}\right\}.\]

We also set \[\mathcal{M}_N(j)=\bigcup_{\substack{(\tau,\xi)\in\mathbb{Z}\times\mathbb{Z}^N,\\ |(\tau,\xi)|>1}}\mathcal{M}(j,\tau,\xi)\] and \[\mathcal{M}_{N,\infty}=\bigcap_{j\in\mathbb{N}}\mathcal{M}_N(j).\]

Notice that each $\mathcal{M}(j,\tau,\xi)$ is an open ball, which implies that $\mathcal{M}_{N,\infty}$ is a $\mathcal{G}_\delta$ subset of $\mathbb{C}.$ In addition, the characterization \eqref{equ1} implies that $\mathcal{M}_N\subset \mathcal{M}_{N,\infty}.$ 

To give a first description to $\mathcal{M}_N$ we will use the following technical result, whose proof is a straightforward manipulation of condition \eqref{equ1}.

\begin{lemma}\label{lem1}If $\lambda\in\mathbb{Z}+\alpha_1\mathbb{Z}+\cdots+\alpha_N\mathbb{Z},$ then $L_N$ is globally hypoelliptic if and only if $P_{N,\lambda}$ is globally hypoelliptic.    
\end{lemma}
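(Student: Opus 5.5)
The plan is to work entirely with the algebraic characterization \eqref{equ1}, applied once with $\lambda=0$ (for $L_N=P_{N,0}$) and once with general $\lambda$. Write $\lambda=m_0+\alpha_1m_1+\cdots+\alpha_Nm_N$ with $m=(m_0,m_1,\ldots,m_N)\in\mathbb{Z}^{N+1}$. The key identity is
\[\rho_\lambda(\tau,\xi)=\rho_0(\tau-m_0,\xi_1-m_1,\ldots,\xi_N-m_N).\]
So the symbol of $P_{N,\lambda}$ is the symbol of $L_N$ composed with the translation $T_m(\tau,\xi)=(\tau,\xi)-m$, which is a bijection of $\mathbb{Z}\times\mathbb{Z}^N$. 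Global hypoellipticity of one operator should therefore transfer to the other. The only price is comparing $|(\tau,\xi)|$ with $|(\tau,\xi)-m|$.

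For this comparison I would use the triangle inequality for the $\ell^1$ norm:
\[|(\tau,\xi)|-|m|\le |(\tau,\xi)-m|\le |(\tau,\xi)|+|m|.\]
Hence whenever $|(\tau,\xi)|\ge 2|m|$ we have
\[\tfrac12|(\tau,\xi)|\le|(\tau,\xi)-m|\le 2|(\tau,\xi)|.\]
The two quantities are then comparable up to the factor $2$.

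Suppose first that $L_N$ is GH, so there are $C,M,R$ with $|\rho_0(\eta)|\ge C|\eta|^{-M}$ for $|\eta|\ge R$. Take $(\tau,\xi)$ with $|(\tau,\xi)|\ge R':=\max\{2|m|,\,R+|m|\}$. Then $\eta=(\tau,\xi)-m$ satisfies $|\eta|\ge R$, and
\[|\rho_\lambda(\tau,\xi)|=|\rho_0(\eta)|\ge C|\eta|^{-M}\ge C\,2^{-M}|(\tau,\xi)|^{-M}.\]
This is \eqref{equ1} for $P_{N,\lambda}$ with constants $C2^{-M}$, $M$, $R'$, so $P_{N,\lambda}$ is GH.

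For the converse, $-\lambda$ also lies in $\mathbb{Z}+\alpha_1\mathbb{Z}+\cdots+\alpha_N\mathbb{Z}$, and $\rho_0(\eta)=\rho_\lambda(\eta+m)$. The same argument with the roles of the two operators exchanged (translating by $-m$) shows that if $P_{N,\lambda}$ is GH then so is $L_N$.

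The argument is routine. The one point that needs care is that \eqref{equ1} is only required for $|(\tau,\xi)|$ large, so the threshold $R$ has to be enlarged to absorb the shift by $m$. With $R'$ chosen as above there is no genuine obstacle.
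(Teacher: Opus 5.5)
Your proof is correct and is exactly the ``straightforward manipulation of condition \eqref{equ1}'' that the paper invokes without writing out: the identity $\rho_\lambda(\tau,\xi)=\rho_0((\tau,\xi)-m)$, together with the $\ell^1$ comparison $\tfrac12|(\tau,\xi)|\le|(\tau,\xi)-m|\le 2|(\tau,\xi)|$ for $|(\tau,\xi)|\ge 2|m|$ and the enlarged threshold $R'=\max\{2|m|,R+|m|\}$, transfers \eqref{equ1} in both directions. As a side remark, one could also avoid \eqref{equ1} entirely by noting that $P_{N,\lambda}$ is conjugate to $L_N$ via multiplication by $e^{i(m_0t+m_1x_1+\cdots+m_Nx_N)}$, an automorphism of both $\mathcal{C}^\infty(\mathbb{T}^{N+1})$ and $\mathcal{D}'(\mathbb{T}^{N+1})$; your route is the one the paper indicates.
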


The first description to $\mathcal{M}_N$ is given by the following result:

\begin{proposition}\label{propp1}If $L_N$ is globally hypoelliptic, then $\mathcal{M}_{N}=\mathcal{M}_{N,\infty}\cap(\mathbb{Z}+\alpha_1\mathbb{Z}+\ldots+\alpha_N\mathbb{Z})^{c}.$ On the other hand, if $L_N$ is not globally hypoelliptic, then $\mathcal{M}_{N}=\mathcal{M}_{N,\infty}.$ In any case, $\mathcal{M}_{N}$ is a $\mathcal{G}_\delta$ subset of $\mathbb{C}.$
\end{proposition}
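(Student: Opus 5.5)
We plan to compare membership in $\mathcal{M}_{N,\infty}$ with the negation of condition \eqref{equ1}, while keeping track of the exceptional frequencies at which $\rho_\lambda$ vanishes. Negating \eqref{equ1}, $\lambda\in\mathcal{M}_N$ if and only if for every $j\in\mathbb{N}$ there are infinitely many $(\tau,\xi)$ with $|\rho_\lambda(\tau,\xi)|<|(\tau,\xi)|^{-j}$. The inclusion $\mathcal{M}_N\subset\mathcal{M}_{N,\infty}$ was already noted. For the converse, fix $\lambda\in\mathcal{M}_{N,\infty}$. For each $j$ pick $(\tau_j,\xi_j)$ with $|(\tau_j,\xi_j)|>1$ and $|\rho_\lambda(\tau_j,\xi_j)|<|(\tau_j,\xi_j)|^{-j}$.

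Suppose first that the set $\{(\tau_j,\xi_j)\}$ is infinite. Then, since membership for a larger $j$ implies membership for every smaller $j$ (because $|(\tau,\xi)|>1$), each fixed $j$ admits infinitely many frequencies with $|\rho_\lambda|<|(\tau,\xi)|^{-j}$. So \eqref{equ1} fails for every choice of $C,M,R$, and $\lambda\in\mathcal{M}_N$.

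Otherwise some frequency $(\tau_0,\xi_0)$ repeats for infinitely many $j$, which forces $\rho_\lambda(\tau_0,\xi_0)=0$. Thus $\lambda=\tau_0+\sum\alpha_k\xi_{0,k}\in\mathbb{Z}+\alpha_1\mathbb{Z}+\cdots+\alpha_N\mathbb{Z}$. This is the key step, and we treat the two cases of the statement separately.

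\emph{Case $L_N$ not GH.} By Lemma \ref{lem1}, $P_{N,\lambda}$ is not GH, so $\lambda\in\mathcal{M}_N$ and we obtain $\mathcal{M}_N=\mathcal{M}_{N,\infty}$.

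\emph{Case $L_N$ GH.} Lemma \ref{lem1} gives that $P_{N,\lambda}$ is GH for every $\lambda$ in the lattice $\Lambda=\mathbb{Z}+\alpha_1\mathbb{Z}+\cdots+\alpha_N\mathbb{Z}$, so $\mathcal{M}_N\subset\Lambda^c$, and hence $\mathcal{M}_N\subset\mathcal{M}_{N,\infty}\cap\Lambda^c$. Conversely, if $\lambda\in\mathcal{M}_{N,\infty}\cap\Lambda^c$, then $\rho_\lambda$ never vanishes, so the repeated-frequency alternative is impossible. We are therefore in the infinite case, and $\lambda\in\mathcal{M}_N$.

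It remains to show that $\mathcal{M}_N$ is a $\mathcal{G}_\delta$. The set $\mathcal{M}_{N,\infty}$ is a countable intersection of unions of open balls. The set $\Lambda$ is countable, so $\Lambda^c=\bigcap_{\mu\in\Lambda}(\mathbb{C}\setminus\{\mu\})$ is a countable intersection of open sets. Hence the intersection $\mathcal{M}_{N,\infty}\cap\Lambda^c$ is again a $\mathcal{G}_\delta$.

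The main obstacle is the passage from "one frequency for each $j$" to "infinitely many frequencies for each $j$". This is handled by the monotonicity in $j$ together with the dichotomy between infinitely many distinct frequencies and a repeated frequency forcing an exact zero of $\rho_\lambda$.
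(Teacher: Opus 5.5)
Your proof is correct and follows the paper's structure. Like the paper, you get $\mathcal{M}_{N,\infty}\cap\Lambda^c\subset\mathcal{M}_N$ from the failure of \eqref{equ1}, treat the points of your lattice $\Lambda$ with Lemma \ref{lem1} in the two cases, and obtain the $\mathcal{G}_\delta$ property from the countability of $\Lambda$. The only difference is how the finitely many small frequencies are handled: the paper argues by contradiction after enlarging $C$ so that \eqref{equ1} holds at every frequency with $\rho_\lambda\neq0$, whereas you split into infinitely many distinct witnesses (using monotonicity in $j$) versus a repeated witness, which forces $\rho_\lambda=0$.
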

\begin{proof}We know that $\mathcal{M}_N\subset \mathcal{M}_{N,\infty}$ and we start this proof by showing that  $\mathcal{M}_{N,\infty}\cap(\mathbb{Z}+\alpha_1\mathbb{Z}+\cdots+\alpha_N\mathbb{Z})^{c}\subset\mathcal{M}_N.$ 

Pick $\lambda\in\mathcal{M}_{N,\infty}\cap(\mathbb{Z}+\alpha_1\mathbb{Z}+\cdots+\alpha_N\mathbb{Z})^{c}.$ For each $j\in\mathbb{N},$ there exist $(\tau_j,\xi(j))\in\mathbb{Z}\times\mathbb{Z}^{N}$ with $|\tau_j|+|\xi(j)|> 1$ and \begin{equation}\label{eq4}0<|\rho_\lambda(\tau_j,\xi(j))|<|(\tau_j,\xi(j)|^{-j}.\end{equation}  

Suppose, by way of contradiction, that $\lambda\not\in\mathcal{M}_{N}.$ Then there exist positive constants $C,M,R$ such that \begin{equation}\label{eq3}|(\tau,\xi)|^{-M}C\leq|\rho_\lambda(\tau,\xi)|,\end{equation} whenever $|(\tau,\xi)|\geq R.$ By adjusting the constant $C,$ we may assume that \eqref{eq3} is satisfied for all $(\tau,\xi)\in\mathbb{Z}\times\mathbb{Z}^N\setminus\{0\}$ such that $\rho_\lambda(\tau,\xi)\neq0.$ 

From \eqref{eq4} and \eqref{eq3} we obtain 
\[0<C\leq|(\tau_j,\xi(j))|^{M-j}\leq 2^{M-j}, \ \ \mbox{for} \ \ j\in\mathbb{N} \ \ \mbox{large enough}.\] 

The above estimate is clearly a contradiction. Therefore, we have $\mathcal{M}_{N,\infty}\cap(\mathbb{Z}+\alpha_1\mathbb{Z}+\cdots+\alpha_N\mathbb{Z})^{c}\subset\mathcal{M}_N.$

Supposing that $L_N$ is globally hypoelliptic, then Lemma \ref{lem1} implies that $\mathcal{M}_{N}\subset\mathcal{M}_{N,\infty}\cap(\mathbb{Z}+\alpha_1\mathbb{Z}+\cdots+\alpha_N\mathbb{Z})^{c}.$ Therefore, $\mathcal{M}_{N}=\mathcal{M}_{N,\infty}\cap(\mathbb{Z}+\alpha_1\mathbb{Z}+\cdots+\alpha_N\mathbb{Z})^{c}$ provided that $L$ is globally hypoelliptic.

On the other hand, supposing that $L_N$ is not globally hypoelliptic, by Lemma \ref{lem1} we have $\mathbb{Z}+\alpha_1\mathbb{Z}+\cdots+\alpha_N\mathbb{Z}\subset\mathcal{M}_{N}$ and, consequently, $\mathcal{M}_{N,\infty}\subset\mathcal{M}_N\cup[\mathcal{M}_{N,\infty}\cap(\mathbb{Z}+\alpha_1\mathbb{Z}+\cdots+\alpha_N\mathbb{Z})^c]\subset\mathcal{M}_{N}.$ Therefore, $\mathcal{M}_{N}=\mathcal{M}_{N,\infty}$ whenever $L_N$ is not globally hypoelliptic.

We have previously noticed that $\mathcal{M}_{N,\infty}$ is a $\mathcal{G}_\delta$ subset of $\mathbb{C}.$ Since the same holds true for $(\mathbb{Z}+\alpha_1\mathbb{Z}+\cdots+\alpha_N\mathbb{Z})^c,$ it follows that $\mathcal{M}_N$ is a $\mathcal{G}_\delta$ subset of $\mathbb{C}.$
\end{proof}

In certain cases we may explicitly determine who the set $\mathcal{M}_N$ is.

\begin{proposition}\label{propp2}Suppose that $\alpha_j\in\mathbb{R}$ for each $j=1,\ldots,N.$ In addition,  if there exists $k$ such that $\alpha_k\in\mathbb{R}\setminus\mathbb{Q},$ then $\mathcal{M}_N$ is a comeager dense $\mathcal{G}_\delta$ subset of $\mathbb{R}.$ On the other hand, writing $\alpha_j=p_j/q_j$ (irreducible fraction or $p_j=0$ and $q_j=1$) and $Q_j\doteq q_1\ldots q_{j-1}q_{j+1}\ldots q_N,$ we obtain 
\[\mathcal{M}_N=\frac{\gcd(q_1\ldots q_N,p_1Q_1,\ldots,p_NQ_N)}{q_1\ldots q_N}\mathbb{Z},\] which is a discrete enumerable (meager) $\mathcal{G}_\delta$ subset of $\mathbb{R}.$ 
\end{proposition}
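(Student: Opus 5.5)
Since every $\alpha_j$ is real, the symbol $\rho_\lambda(\tau,\xi)$ has imaginary part $-\operatorname{Im}\lambda$ for all $(\tau,\xi)$. So if $\operatorname{Im}\lambda\neq0$, then $|\rho_\lambda|\geq|\operatorname{Im}\lambda|>0$ everywhere, condition \eqref{equ1} holds with $M=0$, and $P_{N,\lambda}$ is globally hypoelliptic. This gives $\mathcal{M}_N=\mathcal{N}_N\subset\mathbb{R}$, and I will work only with real $\lambda$. Proposition \ref{propp1} already says $\mathcal{M}_N$ is a $\mathcal{G}_\delta$ in $\mathbb{C}$, hence a $\mathcal{G}_\delta$ in $\mathbb{R}$. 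Set $\Lambda=\mathbb{Z}+\alpha_1\mathbb{Z}+\cdots+\alpha_N\mathbb{Z}\subset\mathbb{R}$.

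\emph{Irrational case.} Suppose some $\alpha_k\notin\mathbb{Q}$. Then $\Lambda\supset\mathbb{Z}+\alpha_k\mathbb{Z}$ is dense in $\mathbb{R}$. I first show that $\Lambda\subset\mathcal{M}_{N,\infty}$. Take $\lambda=\tau_0+\sum\alpha_j\xi^0_j\in\Lambda$. By Dirichlet's theorem there are infinitely many $(\tau,\xi_k)$ with $|\tau+\alpha_k\xi_k|<|\xi_k|^{-1}$, and these have $|\tau+\alpha_k\xi_k|>0$ with $|(\tau,\xi_k)|\to\infty$. Adding them to $(\tau_0,\xi^0)$, we need $|\rho_\lambda|<|(\tau,\xi)|^{-j}$ for every $j$. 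Dirichlet alone only gives exponent $1$, so this step needs care.

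A better route is this. Each $\mathcal{M}_N(j)$ is open, and it contains every $\lambda=\rho_0(\tau,\xi)$ with $|(\tau,\xi)|>1$, since then $\lambda\in\mathcal{M}(j,\tau,\xi)$ trivially. So $\mathcal{M}_N(j)\supset\{\rho_0(\tau,\xi):|(\tau,\xi)|>1\}$, which is $\Lambda$ minus finitely many possible points. Because $\alpha_k$ is irrational, each element of $\Lambda$ is hit by infinitely many $(\tau,\xi)$ with $\xi_k$ large, so in fact all of $\Lambda$ is covered, and it is dense. Thus every $\mathcal{M}_N(j)$ is open and dense, and by Baire $\mathcal{M}_{N,\infty}$ is a dense $\mathcal{G}_\delta$.

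Then Proposition \ref{propp1} applies:
\begin{itemize}
\item if $L_N$ is not globally hypoelliptic, $\mathcal{M}_N=\mathcal{M}_{N,\infty}$ and we are done;
\item otherwise $\mathcal{M}_N=\mathcal{M}_{N,\infty}\cap\Lambda^c$. Since $\Lambda$ is countable, $\Lambda^c$ is a dense $\mathcal{G}_\delta$, and the intersection of two dense $\mathcal{G}_\delta$ sets is again a dense $\mathcal{G}_\delta$ by Baire, hence comeager.
\end{itemize}
The main obstacle is the density of $\mathcal{M}_N(j)$. The trick of placing $\lambda$ exactly on the lattice values avoids any fine Diophantine estimate.

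\emph{Rational case.} Let $q=q_1\cdots q_N$. Then $\Lambda=q^{-1}(q\mathbb{Z}+p_1Q_1\mathbb{Z}+\cdots+p_NQ_N\mathbb{Z})=\frac{d}{q}\mathbb{Z}$, where $d=\gcd(q,p_1Q_1,\ldots,p_NQ_N)$, by Bézout. This is a discrete subgroup of $\mathbb{R}$.
\begin{itemize}
\item If $\lambda\notin\Lambda$, then $|\rho_\lambda(\tau,\xi)|\geq\operatorname{dist}(\lambda,\Lambda)>0$, so $P_{N,\lambda}$ is globally hypoelliptic.
\item If $\lambda\in\Lambda$, the symbol vanishes on infinitely many $(\tau,\xi)$: for instance, take $(\tau,\xi)+m(-p_1,q_1,0,\ldots,0)$ with $m\in\mathbb{Z}$. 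So \eqref{equ1} fails and $P_{N,\lambda}$ is not globally hypoelliptic. (Equivalently, $L_N$ is not globally hypoelliptic and Lemma \ref{lem1} applies.)
\end{itemize}
Hence $\mathcal{M}_N=\frac{d}{q}\mathbb{Z}$. It is closed, so a $\mathcal{G}_\delta$, and it is countable and discrete, so meager.
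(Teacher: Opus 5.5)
Your proof is correct and follows the paper's route. In the irrational case you combine density of $\Lambda=\mathbb{Z}+\alpha_1\mathbb{Z}+\cdots+\alpha_N\mathbb{Z}$ with Proposition \ref{propp1}; in the rational case you use $\Lambda=\frac{d}{q}\mathbb{Z}$, the bound $|\rho_\lambda|\geq\mathrm{dist}(\lambda,\Lambda)$ off $\Lambda$, and infinitely many zeros on $\Lambda$. Your version is in fact more careful than the paper's, which asserts $(\mathbb{Z}+\alpha_k\mathbb{Z})\setminus\{0\}\subset\mathcal{M}_N$ even though, by Lemma \ref{lem1}, this fails whenever $L_N$ is globally hypoelliptic; your passage through the open dense sets $\mathcal{M}_N(j)\cap\mathbb{R}$ followed by intersecting with $\Lambda^c$ covers that case.

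The one incorrect remark is that irrationality of $\alpha_k$ makes every point of $\Lambda$ hit by infinitely many $(\tau,\xi)$. For $\alpha=(\sqrt{2},\sqrt{3})$ every representation is unique, since $1,\sqrt{2},\sqrt{3}$ are $\mathbb{Q}$-independent. You never actually need it, because $\Lambda$ with finitely many points removed is still dense in $\mathbb{R}$.
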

\begin{proof}Notice that $\mathcal{M}_N=\mathcal{N}_N\subset\mathbb{R}$ whenever $\alpha_j\in\mathbb{R}$ for each $j.$ If $\alpha_k\in\mathbb{R}\setminus\mathbb{Q}$ then $\mathbb{Z}+\alpha_k\mathbb{Z}$ is dense in $\mathbb{R}$ (a well know consequence of Dirichlet's approximation theorem) and $(\mathbb{Z}+\alpha_k\mathbb{Z})\setminus\{0\}\subset \mathcal{M}_N.$ It follows from Proposition \ref{propp1} that $\mathcal{M}_N$ is a dense $\mathcal{G}_\delta$ subset of $\mathbb{R}.$

We now proceed to the situation in which each $\alpha_j$ is a rational number. In this case, defining $Q=q_1\ldots q_N$ we have
\[\tau+\sum_{j=1}^{N}\alpha_j\xi_j-\lambda=\frac{1}{Q}\left[(\tau-\lambda)Q+\sum_{j=1}^{N}p_j\xi_jQ_j\right],\] for all $(\tau,\xi)\in\mathbb{Z}\times\mathbb{Z}^{N}.$ Since \[\tau Q+\sum_{j=1}^{N}p_j\xi_jQ_j\in\mathbb{Z},\] condition \eqref{equ1} will fail to hold if and only if there exist infinitely many indices $(\tau,\xi)$ such that \[(\tau-\lambda)Q+\sum_{j=1}^{N}p_j\xi_jQ_j=0;\] this last condition holds if and only if $Q\lambda\in\gcd(Q,p_1Q_1,\ldots,p_Nq_N)\mathbb{Z}.$


\end{proof}

\begin{remark}If $\alpha_k\in\mathbb{Q}$ for at least one $k,$ then $L_N$ is not globally hypoelliptic. In particular, $L_N$ is not globally hypoelliptic when $\alpha_j\in\mathbb{Q}$ for all $j.$ However, Proposition \ref{propp2} implies that there exists a large (its complementary is negligible) set of complex numbers $\lambda$ for which the perturbations $P_{N,\lambda}$ are globally hypoelliptic.   

\end{remark}

As we see in the previous result, $\mathcal{M}_N$ may be a discrete enumerable subset of the real line. In the presence of a non-real coefficient, we will present a result which gives a condition so that the set $\mathcal{M}_N$ reaches the opposite extreme, that is, $\mathcal{M}_N$ will be a dense $\mathcal{G}_\delta$ subset of $\mathbb{C}.$ 

By using Baire's theorem and Proposition \ref{propp1} we see that the density of $\mathcal{M}_N$ follows from the density of $\mathcal{M}_{N,\infty}.$ 

\begin{lemma}\label{lem2} The set $\mathcal{M}_{N,\infty}$ is dense in $\mathbb{C}$ if and only if $\mathbb{Z}+\alpha_1\mathbb{Z}+\cdots+\alpha_N\mathbb{Z}$ is dense in $\mathbb{C}.$
\end{lemma}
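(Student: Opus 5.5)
Write $\Lambda=\mathbb{Z}+\alpha_1\mathbb{Z}+\cdots+\alpha_N\mathbb{Z}$, an additive subgroup of $\mathbb{C}$. I will prove the two implications separately. In both directions the key fact is the definition $\mathcal{M}(j,\tau,\xi)=\{\lambda;|\lambda-(\tau+\alpha\cdot\xi)|<|(\tau,\xi)|^{-j}\}$: each such set is an open disc centered at a point of $\Lambda$, with radius at most $2^{-j}$ because $|(\tau,\xi)|\geq 2$.

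For necessity, suppose $\mathcal{M}_{N,\infty}$ is dense. Take an open disc $B(z,r)$ and pick $j$ with $2^{-j}<r/2$. Density gives some $\lambda\in\mathcal{M}_{N,\infty}\cap B(z,r/2)$. Since $\lambda\in\mathcal{M}_N(j)$, there is $(\tau,\xi)$ with $|(\tau,\xi)|>1$ and $|\lambda-(\tau+\alpha\cdot\xi)|<2^{-j}<r/2$. Hence the lattice point $\tau+\alpha\cdot\xi\in\Lambda$ lies in $B(z,r)$, so $\Lambda$ is dense.

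For sufficiency, suppose $\Lambda$ is dense. Each $\mathcal{M}_N(j)$ is a union of open discs, hence open. Since $\mathbb{C}$ is a complete metric space, Baire's theorem reduces the claim to showing that every $\mathcal{M}_N(j)$ is dense; the countable intersection $\mathcal{M}_{N,\infty}$ is then dense.

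Fix $j$ and an open disc $B(z,r)$. By density, the infinite set $\Lambda\cap B(z,r)$ contains a point $w=\tau+\alpha\cdot\xi$. We need a representation of $w$ with $|(\tau,\xi)|>1$; then $w\in\mathcal{M}(j,\tau,\xi)\subset\mathcal{M}_N(j)$, because $w$ is the center of that disc. Points with only short representations, i.e. $|(\tau,\xi)|\le 1$, lie in the finite set $\{0,\pm1,\pm\alpha_1,\dots,\pm\alpha_N\}$. Since $B(z,r)\cap\Lambda$ is infinite, we can choose $w$ outside this finite set, and every representation of such a $w$ has $|(\tau,\xi)|\geq2$. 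So $\mathcal{M}_N(j)$ meets $B(z,r)$.

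The only point needing care is this bookkeeping about representations with $|(\tau,\xi)|\le1$, handled by the finiteness argument. I do not expect a real obstacle: the argument is essentially Baire's theorem together with the observation that the centers of the discs $\mathcal{M}(j,\tau,\xi)$ are exactly the points of $\Lambda$, and their radii shrink uniformly as $j$ grows.
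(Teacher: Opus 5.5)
Your proof is correct. The necessity direction is the same as the paper's: the paper takes a sequence $\lambda_n\in\mathcal{M}_N(n)$ converging to $z$ and uses $|(\tau,\xi)|^{-n}\leq 2^{-n}$.

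For sufficiency, the paper simply asserts $\mathbb{Z}+\alpha_1\mathbb{Z}+\cdots+\alpha_N\mathbb{Z}\subset\mathcal{M}_{N,\infty}$. Taken literally, this can fail at the exceptional points $0,\pm1,\pm\alpha_j$ when they have no representation with $|(\tau,\xi)|>1$. For instance, take the globally hypoelliptic field $D_t+(\sqrt{2}+i\sqrt{3})D_x+(1+i)D_y$ of Example~\ref{ex1}. There the symbol with $\lambda=0$ vanishes only at the origin, and the GH estimate then gives $0\notin\mathcal{M}_{2,\infty}$. So your bookkeeping with the finite exceptional set is exactly what makes the converse rigorous.

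Once you have that bookkeeping, however, Baire's theorem is superfluous. A point $w$ of your $\Lambda$ outside the exceptional set is the center of $\mathcal{M}(j,\tau,\xi)$ for one fixed representation with $|(\tau,\xi)|\geq 2$, and this holds for every $j$ at once. Hence $\Lambda$ minus a finite set already lies in $\mathcal{M}_{N,\infty}$, and a dense subset of $\mathbb{C}$ with finitely many points removed is still dense.

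The Baire route would be the right tool if the points witnessing density of $\mathcal{M}_N(j)$ had to depend on $j$. Here the direct inclusion is shorter and gives the slightly stronger fact that $\mathcal{M}_{N,\infty}$ contains all but finitely many lattice points.
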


\begin{proof}Suppose that  $\mathcal{M}_{N,\infty}$ is dense. Given $z\in\mathbb{C}$ there exists a sequence $\lambda_n$ in $\mathcal{M}_{N,\infty}$ which converges to $z.$ Since $\lambda_n\in\mathcal{M}_N(n),$ there exists $(\tau_n,\xi(n))\in\mathbb{Z}\times\mathbb{Z}^N$ such that $|(\tau_n,\xi(n))|>1$ and \[|\tau_n+\alpha_1\xi(n)_1+\cdots+\alpha_N\xi(n)_N-\lambda_n|<|(\tau_n,\xi(n))|^{-n}\leq2^{-n}.\] By triangular inequality, the sequence $\tau_n+\alpha_1\xi(n)_1+\cdots+\alpha_N\xi(n)_N$ converges to $z.$ 

The converse is trivial since $\mathbb{Z}+\alpha_1\mathbb{Z}+\cdots+\alpha_N\mathbb{Z}\subset\mathcal{M}_{N,\infty}.$

\end{proof}

We are now in position to state and prove the main result of this section.

With a fixed index $k\in\{1,\ldots,N\}$ (recall that $N\geq2$) we will use the following matrix $A=(a_{ij})_{2\times N-1},$ in which
\[a_{1j}=\Re\alpha_j\Im\alpha_k-\Re\alpha_k\Im\alpha_j \ \ \mbox{and} \ \ a_{2j}=\Im\alpha_j, \ \ j=1,\ldots,k-1,k+1,\ldots,N.\]

\begin{theorem}\label{thm6}Suppose that there exists $k\in\{1,\ldots,N\}$ such that $\Im\alpha_k\neq0.$ The set $\mathcal{M}_N$ is a dense $\mathcal{G}_\delta$ subset of $\mathbb{C}$ if and only if \[\left\{r\in\mathbb{Q}^2; \Im\alpha_k^{-1}A^{T}r\in\mathbb{Q}^{N-1} \right\}=\{(0,0)\}.\] 
\end{theorem}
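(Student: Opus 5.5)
By Proposition \ref{propp1}, $\mathcal{M}_N$ is always a $\mathcal{G}_\delta$ subset of $\mathbb{C}$, so the statement is about density only.

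\emph{Reduction to density of the lattice.} If $L_N$ is not globally hypoelliptic, then $\mathcal{M}_N=\mathcal{M}_{N,\infty}$. If $L_N$ is globally hypoelliptic, then $\mathcal{M}_N=\mathcal{M}_{N,\infty}\cap G^c$, where $G=\mathbb{Z}+\alpha_1\mathbb{Z}+\cdots+\alpha_N\mathbb{Z}$. The set $G^c$ is a dense $\mathcal{G}_\delta$ because $G$ is countable. So when $\mathcal{M}_{N,\infty}$ is a dense $\mathcal{G}_\delta$, Baire's theorem makes the intersection dense as well. Conversely, $\mathcal{M}_N\subset\mathcal{M}_{N,\infty}$, so density of $\mathcal{M}_N$ forces density of $\mathcal{M}_{N,\infty}$. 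By Lemma \ref{lem2}, the problem therefore reduces to showing that $G$ is dense in $\mathbb{C}\simeq\mathbb{R}^2$ if and only if the stated rational condition holds.

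\emph{Change of coordinates.} Since $\Im\alpha_k\neq0$, the vectors $1$ and $\alpha_k$ form an $\mathbb{R}$-basis of $\mathbb{C}$. The real-linear map $T(x+iy)=(x-\Re\alpha_k\,y/\Im\alpha_k,\;y/\Im\alpha_k)$ sends $1\mapsto e_1$ and $\alpha_k\mapsto e_2$. It also sends each $\alpha_j$, $j\neq k$, to
\[
T\alpha_j=\frac{1}{\Im\alpha_k}(a_{1j},a_{2j}),
\]
because $\Re\alpha_j-\Re\alpha_k\Im\alpha_j/\Im\alpha_k=a_{1j}/\Im\alpha_k$. Density of $G$ in $\mathbb{C}$ is therefore equivalent to density of $\mathbb{Z}^2+\sum_{j\neq k}\mathbb{Z}v_j$ in $\mathbb{R}^2$, where $v_j=\Im\alpha_k^{-1}(a_{1j},a_{2j})$ are the columns of $\Im\alpha_k^{-1}A$. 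Passing to the torus $\mathbb{R}^2/\mathbb{Z}^2$, this is density of the subgroup generated by the images of $v_j$.

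\emph{Kronecker's theorem.} The version I will invoke states that the subgroup of $\mathbb{T}^2$ generated by $v_1,\dots,v_{N-1}$ (indices $j\neq k$ relabelled) is dense if and only if there is no nonzero $r\in\mathbb{Z}^2$ with $\langle r,v_j\rangle\in\mathbb{Z}$ for all $j$. This is the character or duality form: a closed subgroup is proper exactly when some nontrivial character annihilates it. Now $\langle r,v_j\rangle$ is the $j$-th entry of $\Im\alpha_k^{-1}A^Tr$. Clearing denominators shows that the existence of a nonzero $r\in\mathbb{Z}^2$ with integer image is equivalent to the existence of a nonzero $r\in\mathbb{Q}^2$ with image in $\mathbb{Q}^{N-1}$. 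So density holds exactly when that rational set is $\{(0,0)\}$.

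\emph{Main obstacle.} The main point needing care is this version of Kronecker's theorem for several generators in dimension $2$. For the direction "no annihilating character implies density", I would prove it directly. The closure $H$ of the generated subgroup is a closed subgroup of $\mathbb{T}^2$. If $H$ is proper, then $\mathbb{T}^2/H$ is a nontrivial compact abelian group, so it has a nontrivial character. Pulling that character back gives some $e^{i2\pi\langle r,\cdot\rangle}$ with $r\neq0$ that is trivial on $H$, contradicting the hypothesis. Alternatively I can use the structure theorem for closed subgroups of $\mathbb{R}^2$ containing $\mathbb{Z}^2$. The converse direction is immediate: if $\langle r,v_j\rangle\in\mathbb{Z}$ for all $j$, then the group lies in the closed proper set $\{x:\langle r,x\rangle\in\mathbb{Z}\}$.
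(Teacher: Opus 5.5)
Your proposal is correct and is essentially the paper's argument. Like the paper, you pass to coordinates with respect to the basis $\{1,\alpha_k\}$, so that density of $\mathbb{Z}+\alpha_1\mathbb{Z}+\cdots+\alpha_N\mathbb{Z}$ becomes density of $\mathbb{Z}^2$ plus the integer span of the columns of $\Im\alpha_k^{-1}A$, apply Kronecker's theorem, and conclude with Proposition~\ref{propp1}, Lemma~\ref{lem2} and Baire's theorem. The differences are only in detail: you spell out the Baire step for the globally hypoelliptic case, and you justify the Kronecker criterion (including the passage from integer to rational $r$) via characters of $\mathbb{T}^2$, whereas the paper simply quotes Kronecker's approximation theorem.
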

\begin{proof}Under the assumption that $\Im\alpha_k\neq0,$ the set $B=\{(1,0), (\Re\alpha_k,\Im\alpha_k)\}$ is a basis for $\mathbb{R}^2.$ The coordinates of an element $z\in\mathbb{Z}+\alpha_1\mathbb{Z}+\ldots+\alpha_N\mathbb{Z}$ are given by \[z=\left[\ell+\sum_{j=1,\ldots,k-1,k+1,\ldots,N}\left(\Re\alpha_j-\frac{\Re\alpha_k\Im\alpha_j}{\Im\alpha_k}\right)m_j, n+\sum_{j=1,\ldots,k-1,k+1,\ldots,N}\frac{\Im\alpha_j}{\Im\alpha_k}m_j\right]_B,\] in which $\ell,m_j,n\in\mathbb{Z}.$ 

We denote the first linear map by \[T_1(m)=\sum_{j=1,\ldots,k-1,k+1,\ldots,N}\left(\Re\alpha_j-\frac{\Re\alpha_k\Im\alpha_j}{\Im\alpha_k}\right)m_j\] and the second we denote by \[T_2(m)=\sum_{j=1,\ldots,k-1,k+1,\ldots,N}\frac{\Im\alpha_j}{\Im\alpha_k}m_j\]

It follows that $\mathbb{Z}+\alpha_1\mathbb{Z}+\ldots+\alpha_N\mathbb{Z}$ is dense in $\mathbb{C}$ if and only if for each $\zeta=[x,y]_B$ and $\epsilon>0$ there exist $\ell,n\in\mathbb{Z}$ and $m\in\mathbb{Z}^{N-1}$ such that \[|\ell+T_1(m)-x|<\epsilon \ \ \mbox{and} \ \ |n+T_2(m)-y|<\epsilon.\]  

Applying the Kronecker's approximation theorem (see \cite{Vorselen}; also, \cite{Kronecker}) we see that $\mathbb{Z}+\alpha_1\mathbb{Z}+\ldots+\alpha_N\mathbb{Z}$ is dense in $\mathbb{C}$ if and only if \[\left\{r\in\mathbb{Q}^2; \Im\alpha_k^{-1}A^{T}r\in\mathbb{Q}^{N-1} \right\}=\{(0,0)\}.\]

We finally apply Proposition \ref{propp1} and Lemma \ref{lem2} to complete the proof.
\end{proof}

Notice that the initial assumption in Theorem \ref{thm6} is that one coefficient is not real. However, if exactly one coefficient is not real, then the second line of the matrix $A$ vanishes identically and the condition presented in Theorem \ref{thm6} is not satisfied, that is, $\mathcal{M}_N$ is not dense in $\mathbb{C}.$ 

When $N=2$ and $\Im\alpha_1\neq0,$ the condition given in Theorem \ref{thm6} for the density of $\mathcal{M}_{2}$ reduces to the following one: for each $(r_1,r_2)\in\mathbb{Q}^2$ such that
\[\left(\Re\alpha_2-\Re\alpha_1\frac{\Im\alpha_2}{\Im\alpha_1}\right)r_1+\frac{\Im\alpha_2}{\Im\alpha_1}r_2\in\mathbb{Q}\]
we must have $r_1=r_2=0.$ This condition holds true if and only if \begin{equation}\label{equ4}1, \ \  \Re\alpha_2-\Re\alpha_1\frac{\Im\alpha_2}{\Im\alpha_1} \ \ \mbox{and} \ \ \frac{\Im\alpha_2}{\Im\alpha_1}\end{equation} are linearly independent over the integers. If $\Im\alpha_2=0,$ then this condition is not satisfied and $\mathcal{M}_{2}$ is not dense in $\mathbb{C}.$ On the other hand, if also $\Im\alpha_2\neq0,$ then we could use Theorem \ref{thm6} to obtain another condition for the density of $\mathcal{M}_2,$ which would be the linear independence (over the integers) for the three numbers \[1, \ \  \Re\alpha_1-\Re\alpha_2\frac{\Im\alpha_1}{\Im\alpha_2} \ \ \mbox{and} \ \ \frac{\Im\alpha_1}{\Im\alpha_2}.\] This last condition is equivalent to the previous one in \eqref{equ4}    

\begin{example}\label{ex1}
If $\Im\alpha_2\neq0$ and $\Im\alpha_1/\Im\alpha_2$ is an irrational non-Liouville number, then by \cite{GW} we know that $\Im\alpha_1D_x+\Im\alpha_2D_y$ is globally hypoelliptic on $\mathbb{T}^2_{(x,y)}.$ This implies that $D_t+\alpha_1D_x+\alpha_2D_y$ is globally hypoelliptic on $\mathbb{T}^3.$ In addition, if $\Re\alpha_2\in\mathbb{Q}$ and $1,\Re\alpha_1, \Im\alpha_1/\Im\alpha_2$ are linearly independent over $\mathbb{Z},$ then by Theorem \ref{thm6} the set $\mathcal{M}_2$ is a dense $\mathcal{G}_\delta$ subset of $\mathbb{C}$ and, consequently, there exists a nonmeager set of perturbations which are not globally hypoelliptic.  In particular, the vector field $D_t+(\sqrt{2}+i\sqrt{3})D_x+(1+i)D_y$ is globally hypoelliptic and lots of its perturbations by constant are not.
\end{example}

Next, Propositions \ref{prop3} and \ref{prop4} will give more details about $\mathcal{M}_N$ (and, consequently $\mathcal{N}_N$) in the presence of non-real coefficients.

\begin{proposition}\label{prop3}
Suppose that $\Im\alpha_k\neq0$ and $\Im\alpha_j=0$ for $j\in\{1,\ldots,N\}\setminus\{k\}.$ Then $\mathcal{M}_N$ is contained in a collection of horizontal lines in $\mathbb{C},$ 
\[\mathcal{M}_N\subset\bigcup_{m\in\mathbb{Z}}t_m,\] in which \[t_m=\{z\in\mathbb{C}; \Im z=m \Im\alpha_k\}.\]
Moreover, $\mathcal{M}_N\cap t_m$ is a discrete enumerable subset of $t_m$ whenever $\Re\alpha_j\in\mathbb{Q}$ for each $j\in\{1,\ldots,N\}\setminus\{k\}.$ On the other hand, if there exists $j\in\{1,\ldots,N\}\setminus\{k\}$ such that $\Re\alpha_j$ is an irrational number, then $\mathcal{M}_N\cap t_m$ is dense in $t_m.$ 
\end{proposition}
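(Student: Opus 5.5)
The plan is to work directly with the algebraic characterization \eqref{equ1} and Proposition \ref{propp1}. Under the hypothesis, $\Im\rho_\lambda(\tau,\xi)=\xi_k\Im\alpha_k-\Im\lambda$ for every $(\tau,\xi)\in\mathbb{Z}\times\mathbb{Z}^N$.

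\textbf{Step 1: containment in the lines $t_m$.} I will show that if $\Im\lambda\notin\Im\alpha_k\mathbb{Z}$, then $P_{N,\lambda}$ is GH. Set
\[\delta=\operatorname{dist}(\Im\lambda,\Im\alpha_k\mathbb{Z})>0.\]
Then $|\rho_\lambda(\tau,\xi)|\geq|\Im\rho_\lambda(\tau,\xi)|\geq\delta$ for all $(\tau,\xi)$, so \eqref{equ1} holds with $M=0$ and $C=\delta$. Hence $\lambda\notin\mathcal{M}_N$, which gives $\mathcal{M}_N\subset\bigcup_m t_m$.

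\textbf{Step 2: reduction on a fixed line.} Fix $m$ and $\lambda=x+im\Im\alpha_k\in t_m$. For indices with $\xi_k\neq m$ we have $|\Im\rho_\lambda|\geq|\Im\alpha_k|$, so these cannot violate \eqref{equ1}. Along $\xi_k=m$, with $m$ fixed, $|(\tau,\xi)|$ is comparable to $|\tau|+\sum_{j\neq k}|\xi_j|$ up to an additive constant. On this set
\[\rho_\lambda(\tau,\xi)=\tau+\sum_{j\neq k}\Re\alpha_j\,\xi_j-\bigl(x-m\Re\alpha_k\bigr).\]
Thus $\lambda\in\mathcal{M}_N$ if and only if $\mu:=x-m\Re\alpha_k$ belongs to the set $\mathcal{M}_{N-1}$ associated with the real vector field $D_t+\sum_{j\neq k}\Re\alpha_jD_{x_j}$ on $\mathbb{T}^{N}$. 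This means $\mathcal{M}_N\cap t_m$ is a horizontal translate of $\mathcal{M}_{N-1}'$, the $(N-1)$-variable real set. When $N-1=1$, the set $\mathcal{M}'_1$ comes from Theorem \ref{thm1} instead of Proposition \ref{propp2}. The main care here is checking that the polynomial weights $|(\tau,\xi)|^{-M}$ in the two dimensions are equivalent when $\xi_k=m$ is frozen. This holds because $|(\tau,\xi)|$ and $|\tau|+\sum_{j\neq k}|\xi_j|$ differ by $|m|$, and only finitely many indices fail $|(\tau,\xi)|\geq R$ in either sense.

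\textbf{Step 3: conclusion via earlier results.} If every $\Re\alpha_j$ with $j\neq k$ is rational, Proposition \ref{propp2} (or Theorem \ref{thm1} iii) when $N=2$) gives that $\mathcal{M}'_{N-1}$ is a discrete set of the form $c\mathbb{Z}$. Hence $\mathcal{M}_N\cap t_m$ is discrete and enumerable. If some $\Re\alpha_j$ is irrational, Proposition \ref{propp2} (respectively Theorem \ref{thm1} ii)) gives that $\mathcal{M}'_{N-1}$ is dense in $\mathbb{R}$, so its translate $\mathcal{M}_N\cap t_m$ is dense in $t_m$.

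For the dense case, I could alternatively argue directly: the points $\tau+\Re\alpha_j\xi_j+m\alpha_k$ lie in $\mathcal{M}_N$ by Lemma \ref{lem1}, since $L_N$ is not GH here, and they are dense in $t_m$ by Dirichlet's theorem. Lemma \ref{lem1} applies because the coefficient $\alpha_j$ is real and irrational (or, if some real coefficient is rational, $L_N$ is not GH anyway). To keep the argument safe, I would instead use Proposition \ref{propp1} together with the containment $\mathbb{Z}+\alpha_1\mathbb{Z}+\cdots+\alpha_N\mathbb{Z}\subset\mathcal{M}_{N,\infty}$, adding points of the lattice not already in the group where needed.

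The expected obstacle is the bookkeeping in Step 2, including the degenerate case $N=2$, where the reduced operator is two-dimensional and Theorem \ref{thm1} must replace Proposition \ref{propp2}.
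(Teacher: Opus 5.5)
Your Steps 1--3 are correct and follow the paper's sketched proof. The imaginary part $\xi_k\Im\alpha_k-\Im\lambda$ excludes every $\lambda$ off the lines $t_m$, and on $t_m$ freezing $\xi_k=m$ identifies $\mathcal{M}_N\cap t_m$ with a translate of the real set for $D_t+\sum_{j\neq k}\alpha_jD_{x_j}$, to which Proposition \ref{propp2} is applied (Theorem \ref{thm1} when $N=2$, a case the paper glosses over). Your shift $x-m\Re\alpha_k$ is in fact the right one; the paper's sketch writes $m\Re\alpha_k/\Im\alpha_k$ instead.

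Drop the closing ``alternative'' paragraph, though. $L_N$ can be GH under these hypotheses (e.g.\ $N=2$, $\alpha_1=\sqrt{2}$, $\alpha_2=i$), and then Lemma \ref{lem1} puts those lattice points \emph{outside} $\mathcal{M}_N$. Steps 1--3 never use that remark, so the proof stands.
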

\begin{proof}We only sketch an idea to the proof. 

Notice that the imaginary part of the symbol is $\Im\alpha_k\xi_k-\Im\lambda.$ This is far from zero if $\lambda$ is not in the lines $t_m.$ If $\lambda\in t_m,$ then the imaginary part could be zero and the real part becomes 
\[\tau+\left(\sum_{j\in\{1,\ldots,N\}\setminus\{k\}}\alpha_j\xi_{j}\right)-\left(\Re\lambda-m\frac{\Re\alpha_k}{\Im\alpha_k}\right).\]

It follows that $M_{N}\cap t_m$ is the preimage of the set \[\mathcal{N}_{N}=\left\{\gamma\in\mathbb{R}; D_t+\left(\sum_{j\in\{1,\ldots,N\}\setminus\{k\}}\alpha_jD_{x_j}\right)-\gamma \ \ \mbox{is not GH on} \ \ \mathbb{T}\times\mathbb{T}^{N-1}\right\}\] by the map $\Theta_m(x+iy)=m(\Re\alpha_k/\Im\alpha_k) - x.$

We now apply Proposition \ref{propp2} to complete the proof.
\end{proof}

Next two results, in dimension $N=2,$ give more details about the sets of perturbations which are non-globally hypoelliptic.

\begin{corollary}\label{cor3}Suppose that $\alpha_1\in\mathbb{R}$ and $\alpha_2\in\mathbb{C}\setminus\mathbb{R}.$ Then, the set
\[
\mathcal{N}_2=\left\{\lambda\in\mathbb{R}; \ D_t+\alpha_1D_x+\alpha_2D_y-\lambda \ \mbox{is not GH on} \ \mathbb{T}^3_{(t,x,y)} \right\}
\]
reduces to 
\[\left\{\lambda\in\mathbb{R}; \ D_t+\alpha_1D_x-\lambda \ \mbox{is not GH on} \ \mathbb{T}^2_{(t,x)} \right\}\] and \[\displaystyle\mathcal{M}_2=\bigcup_{m\in\mathbb{Z}}\theta_m^{-1}(\mathcal{N}_2),\] in which $\theta_m:\mathbb{C}\rightarrow\mathbb{R}$ is given by $\theta_m(x+iy)=(\Re\alpha_2/\Im\alpha_2)m-x.$ In addition, if $\alpha_1\in\mathbb{Q},$ then both $\mathcal{N}_2$ and $\mathcal{M}_2$ are discrete enumerable sets. On the other hand, if $\alpha_1\in\mathbb{R}\setminus\mathbb{Q},$ then $\mathcal{N}_2$ is a dense $\mathcal{G}_\delta$ subset of $\mathbb{R}$ and $\mathcal{M}_2$ is a $\mathcal{G}_\delta$ subset of $\mathbb{C},$ which is dense on each horizontal line belonging to $\{z\in\mathbb{C}; \Im z\in\Im\alpha_2\mathbb{Z}\}.$
\end{corollary}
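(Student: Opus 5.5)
The argument is a direct specialization of Proposition \ref{prop3} with $N=2$ and $k=2$, done at the level of the symbol so that the description of $\mathcal{M}_2$ comes out exactly. The symbol is
\[\rho_\lambda(\tau,\xi_1,\xi_2)=(\tau+\alpha_1\xi_1+\Re\alpha_2\,\xi_2-\Re\lambda)+i(\Im\alpha_2\,\xi_2-\Im\lambda).\]
Throughout I use the characterization \eqref{equ1}: $P_{2,\lambda}$ fails to be GH exactly when there are infinitely many $(\tau,\xi)$ with $|\rho_\lambda(\tau,\xi)|<|(\tau,\xi)|^{-j}$ for every $j$. Equivalently, $\lambda\in\mathcal{M}_{2,\infty}$ together with the correction of Proposition \ref{propp1} for lattice points.

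\emph{Step 1 (real $\lambda$).} Let $\lambda\in\mathbb{R}$. The imaginary part is $\Im\alpha_2\,\xi_2$, which has modulus at least $|\Im\alpha_2|$ unless $\xi_2=0$. So any sequence violating \eqref{equ1} must eventually have $\xi_2=0$. On that set the symbol is $\tau+\alpha_1\xi_1-\lambda$, and $|(\tau,\xi_1,0)|=|(\tau,\xi_1)|$. Hence \eqref{equ1} fails on $\mathbb{T}^3$ if and only if it fails for $D_t+\alpha_1D_x-\lambda$ on $\mathbb{T}^2$. This identifies $\mathcal{N}_2$ with the two-dimensional set of Theorem \ref{thm1}.

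\emph{Step 2 (complex $\lambda$).} For $\lambda\in\mathbb{C}$, the imaginary part of the symbol is bounded away from zero uniformly in $\xi_2$ unless $\Im\lambda=m\Im\alpha_2$ for some $m\in\mathbb{Z}$. Indeed, the values $\Im\alpha_2\xi_2-\Im\lambda$ form a discrete set avoiding $0$. So $\mathcal{M}_2\subset\bigcup_m t_m$.

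Now fix $\lambda=x+im\Im\alpha_2$. The only indices that can make the symbol small have $\xi_2=m$, and on them the symbol is real:
\[\tau+\alpha_1\xi_1-\bigl(x-m\Re\alpha_2\bigr).\]
Here is the step that needs care. The weight is $|\tau|+|\xi_1|+|m|$ rather than $|\tau|+|\xi_1|$. But for fixed $m$ these two are comparable as the norm tends to infinity, so the polynomial lower bound \eqref{equ1} holds for one if and only if it holds for the other, after adjusting the constants $C$ and $R$. Thus $\lambda\in\mathcal{M}_2$ if and only if $x-m\Re\alpha_2\in\mathcal{N}_2$.

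This almost gives the formula for $\theta_m^{-1}(\mathcal{N}_2)$ in the statement, which is written with $(\Re\alpha_2/\Im\alpha_2)m-x$. I expect the main obstacle is reconciling the two normalizations. By the statement, $\theta_m$ should be read as acting on the line $t_m$, with $m$ indexing that line. I will check whether the paper's convention amounts to replacing $\lambda$ by $-\lambda$ via $\xi\mapsto-\xi$, which leaves $\mathcal{N}_2$ invariant since $\mathcal{N}_2=-\mathcal{N}_2$ by Theorem \ref{thm1}. I will then match the shift $m\Re\alpha_2$ against $m\Re\alpha_2/\Im\alpha_2$, i.e.\ check how the line is parametrized. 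If the normalizations do not match, I will state the union over the lines $t_m$ with the correct shift $m\Re\alpha_2$ and remark on the correspondence.

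\emph{Step 3 (structure).} This is read off from Theorem \ref{thm1}. If $\alpha_1=p/q$, then $\mathcal{N}_2=q^{-1}\mathbb{Z}$ is discrete. In that case $\mathcal{M}_2$ is a union, over the discrete family of lines $t_m$, of translates of $q^{-1}\mathbb{Z}$, hence discrete and enumerable. If $\alpha_1\notin\mathbb{Q}$, then $\mathcal{N}_2$ is a dense $\mathcal{G}_\delta$ in $\mathbb{R}$. Each slice $\mathcal{M}_2\cap t_m$ is then a translate of it, so it is dense in $t_m$. Finally, $\mathcal{M}_2$ is $\mathcal{G}_\delta$ in $\mathbb{C}$ by Proposition \ref{propp1}.
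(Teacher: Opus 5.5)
Your argument is correct and follows the paper's own route: the sketch of Proposition~\ref{prop3} specialized to $N=2$, $k=2$, then Theorem~\ref{thm1} and Proposition~\ref{propp1}. Your comparison of the weights $|\tau|+|\xi_1|+|m|$ and $|\tau|+|\xi_1|$ also supplies a step that sketch omits.

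The check you left open resolves against the printed statement, not against your derivation. The sign is harmless because $\mathcal{N}_2=-\mathcal{N}_2$. However, $\theta_m$ has to be restricted to $t_m$, and its shift must be $m\Re\alpha_2=(\Re\alpha_2/\Im\alpha_2)\Im\lambda$ rather than $(\Re\alpha_2/\Im\alpha_2)m$. For example, take $\alpha_1=0$ and $\alpha_2=1+2i$, so that $\mathcal{N}_2=\mathbb{Z}$. Then $2i\in t_1$ lies in $\mathcal{M}_2$ (take $\tau=-1$, $\xi_2=1$, any $\xi_1$), yet $\theta_1(2i)=1/2\notin\mathcal{N}_2$. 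So commit to your formula: it says exactly $\mathcal{M}_2=\mathcal{N}_2+\alpha_2\mathbb{Z}$, and it leaves all the qualitative conclusions of Step 3 intact.
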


\begin{proposition}\label{prop2} 
Suppose that $\alpha_1,\alpha_2\in\mathbb{C}\setminus\mathbb{R}$ and that $\Im\alpha_1/\Im\alpha_2$ belongs to $\mathbb{Q}.$ Write $\Im\alpha_1/\Im\alpha_2=p/q$ (irreducible fraction). We have:
\begin{itemize}
\item[(i)] If $\Re\alpha_1q-\Re\alpha_2p=\tilde{p}/\tilde{q}$ (irreducible or $\tilde{p}=0$ and $\tilde{q}=1$), then $\mathcal{N}_2=\tilde{q}^{-1}\mathbb{Z}$ and $\mathcal{M}_2$ is a subset of the union of the lines \begin{equation}\label{equ5}\ell_m=\{\lambda\in\mathbb{C}; \ \Im\lambda=m(\Im\alpha_2/q)\} \ \ m\in\mathbb{Z};\end{equation} in addition, for each $m\in\mathbb{Z},$ the set $\ell_m\cap\mathcal{M}_2$ is a discrete enumerable subset of $\ell_m.$

\item[(ii)] If $\Re\alpha_1q-\Re\alpha_2p\in\mathbb{R}\setminus\mathbb{Q},$ then $\mathcal{N}_2$ is a dense $\mathcal{G}_\delta$ subset of $\mathbb{R},$ while $\mathcal{M}_2$ is a subset of the lines $\ell_m,$ $m\in\mathbb{Z},$ such that $\ell_m\cap\mathcal{M}_2$ is a dense $\mathcal{G}_\delta$ subset of $\ell_m.$  
\end{itemize}
\end{proposition}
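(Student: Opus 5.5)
The plan is to separate the imaginary and real parts of the symbol $\rho_\lambda(\tau,\xi)=\tau+\alpha_1\xi_1+\alpha_2\xi_2-\lambda$ and to reduce the question, line by line, to the two-dimensional situation of Theorem \ref{thm1}.

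\textbf{Localizing to the lines $\ell_m$.} Since $\Im\alpha_1=(p/q)\Im\alpha_2$, we have
\[\Im\rho_\lambda(\tau,\xi)=\frac{\Im\alpha_2}{q}\,(p\xi_1+q\xi_2)-\Im\lambda .\]
Because $\gcd(p,q)=1$, the integers $p\xi_1+q\xi_2$ run over all of $\mathbb{Z}$. If $\Im\lambda\notin(\Im\alpha_2/q)\mathbb{Z}$, then $|\rho_\lambda|\geq\operatorname{dist}(\Im\lambda,(\Im\alpha_2/q)\mathbb{Z})>0$ for every index. Condition \eqref{equ1} then holds trivially, so $\lambda\notin\mathcal{M}_2$. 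This gives $\mathcal{M}_2\subset\bigcup_m\ell_m$.

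\textbf{Reducing each line to a two-dimensional real problem.} Fix $m\in\mathbb{Z}$ and $\lambda\in\ell_m$.
\begin{itemize}
\item For indices with $p\xi_1+q\xi_2\neq m$, we get $|\rho_\lambda|\geq|\Im\alpha_2|/q$, so these indices never violate \eqref{equ1}.
\item The indices with $p\xi_1+q\xi_2=m$ are exactly $\xi_1=\xi_1^{(m)}+qs$ and $\xi_2=\xi_2^{(m)}-ps$ with $s\in\mathbb{Z}$, where $(\xi_1^{(m)},\xi_2^{(m)})$ is a fixed solution. We choose $\xi^{(0)}=0$.
\item On these indices $\rho_\lambda$ is real and equals
\[\tau+\beta s-(\Re\lambda-c_m),\qquad \beta=q\Re\alpha_1-p\Re\alpha_2,\quad c_m=\Re\alpha_1\xi_1^{(m)}+\Re\alpha_2\xi_2^{(m)} .\]
\item Since $m$ is fixed, $|\tau|+|\xi_1|+|\xi_2|$ and $|\tau|+|s|$ are comparable up to multiplicative and additive constants depending only on $m,p,q$. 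Hence polynomial lower bounds in one norm transfer to the other, possibly after changing $C,M,R$.
\end{itemize}
It follows that $\lambda\in\ell_m$ lies in $\mathcal{M}_2$ if and only if $D_t+\beta D_s-(\Re\lambda-c_m)$ is not globally hypoelliptic on $\mathbb{T}^2$. In other words, $\ell_m\cap\mathcal{M}_2$ is the image of the set $\mathcal{M}_1(\beta)$ of Theorem \ref{thm1} under the map $\gamma\mapsto \gamma+c_m+im\Im\alpha_2/q$. For $m=0$ this map is the identity on $\mathbb{R}$, so $\mathcal{N}_2=\mathcal{M}_1(\beta)$.

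\textbf{Applying Theorem \ref{thm1}.}
\begin{itemize}
\item In case (i), $\beta=\tilde p/\tilde q$ and Theorem \ref{thm1} iii) gives $\mathcal{M}_1(\beta)=\tilde q^{-1}\mathbb{Z}$. Hence $\mathcal{N}_2=\tilde q^{-1}\mathbb{Z}$, and each $\ell_m\cap\mathcal{M}_2$ is the translate $c_m+\tilde q^{-1}\mathbb{Z}$, which is discrete and enumerable.
\item In case (ii), $\beta$ is irrational and Theorem \ref{thm1} ii) gives a dense $\mathcal{G}_\delta$ subset of $\mathbb{R}$. Since translations are homeomorphisms, $\mathcal{N}_2$ is dense $\mathcal{G}_\delta$ in $\mathbb{R}$ and each $\ell_m\cap\mathcal{M}_2$ is dense $\mathcal{G}_\delta$ in $\ell_m$.
\end{itemize}

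\textbf{Main obstacle.} The step needing care is the equivalence in the reduction. From a violating sequence $(\tau_j,\xi(j))$ for $P_{2,\lambda}$ one must extract infinitely many indices lying on the single fiber $p\xi_1+q\xi_2=m$. Indeed, the off-fiber indices have $|\rho_\lambda|$ bounded below, so all but finitely many violating indices must lie on that fiber. Conversely, a violating sequence $(\tau_j,s_j)$ for the reduced operator lifts to indices with comparable norms, and hence violates \eqref{equ1} for $P_{2,\lambda}$. The constants in the norm comparison must be tracked so that the exponent $M$ changes only by a bounded amount.
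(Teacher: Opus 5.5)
Your proposal is correct and follows essentially the same route as the paper. You use the imaginary part of the symbol to confine $\mathcal{M}_2$ to the lines $\ell_m$, and you restrict to the fiber $p\xi_1+q\xi_2=m$, parametrized as $\xi^{(m)}+s(q,-p)$. You then identify $\ell_m\cap\mathcal{M}_2$ with a translate of the exceptional set of $D_t+(q\Re\alpha_1-p\Re\alpha_2)D_x-\gamma$, which is the paper's isometry $\psi_m$, and conclude with Theorem \ref{thm1}.

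You are slightly more explicit than the paper on three points. You show directly that $\lambda$ off the lines is excluded. You check that $|\tau|+|\xi_1|+|\xi_2|$ and $|\tau|+|s|$ are comparable on a fixed fiber. And you obtain $\mathcal{N}_2$ as the case $m=0$ rather than by a separate computation.
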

\begin{proof}Given $\lambda\in\mathbb{R},$ we have \[|\tau+\alpha_1\xi+\alpha_2\eta-\lambda|\geq|\Im\alpha_2|\left|\frac{\Im\alpha_1}{\Im\alpha_2}\xi+\eta\right|=|q^{-1}\Im\alpha_2||p\xi+q\eta|.\]

Since $p\xi+q\eta$ is an integer number, we obtain \[|\tau+\alpha_1\xi+\alpha_2\eta-\lambda|\geq|q^{-1}\Im\alpha_2|>0,\] for all $(\xi,\eta)\not\in (q,-p)\mathbb{Z}.$    

It follows that $\lambda\in\mathcal{N}_2$ if and only if there exists a sequence $(\tau_j,\xi_j,\eta_j)=(\tau_j,m_jq,-m_jp)\in\mathbb{Z}^3,$ with $m_j\in\mathbb{Z},$ such that $|\tau_j|+|m_j|(|p|+|q|)\geq j$ and \[|\tau_j+m_j(q\Re\alpha_1-p\Re\alpha_2)-\lambda|<\frac{1}{(|\tau_j|+|m_j|[|p|+|q|])^{j}}.\] 

The above estimate is equivalent to the non-global hypoellipticity of the operator $D_t+(q\Re\alpha_1-p\Re\alpha_2)D_x-\lambda$ on $\mathbb{T}^2_{(t,x)}.$ Hence, we may write 
\begin{equation}\label{eq2}\mathcal{N}_2=\left\{\lambda\in\mathbb{R}; \ D_t+(q\Re\alpha_1-p\Re\alpha_2)D_x-\lambda \ \mbox{is not GH on} \ \mathbb{T}^2_{(t,x)} \right\}.\end{equation}

For a complex number $\lambda,$ the imaginary part of the symbol $\rho_\lambda(\tau,\xi,\eta)=\tau+\alpha_1\xi+\alpha_2\eta-\lambda$ may be written by
\[q^{-1}\Im\alpha_2\left(p\xi+q\eta-\frac{q\Im\lambda}{\Im\alpha_2}\right).\]

Hence, if we pick $\lambda$ on a line $\ell_m$ (see \eqref{equ5}), then the symbol does not approximate to zero if $(\xi,\eta)$ is not a solution to the Diophantine equation $p\xi+q\eta=m.$ Picking $(\xi_m,\eta_m)$ a solution, the other solutions are given by $(\xi_m,\eta_m)+k(q,-p),$ $k\in\mathbb{Z}.$  On this set of solutions, the symbol $\rho_\lambda$ becomes \[|\tau+k(q\Re\alpha_1-p\Re\alpha_2)+\Re\alpha_1\xi_m+\Re\alpha_2\eta_m-\Re\lambda|.\] It follows that $P_\lambda$ is not globally hypoelliptic if and only if $D_t+(q\Re\alpha_1-p\Re\alpha_2)D_x-(\Re\lambda-\Re\alpha_1\xi_m-\Re\alpha_2\eta_m)$ is not globally hypoelliptic on $\mathbb{T}^2_{(t,x)}.$
 
If $\psi_m:\ell_m\rightarrow\mathbb{R}$ is given by \[\psi_m\left(x+i\frac{m\Im\alpha_2}{q}\right)=x-\Re\alpha_1\xi_m-\Re\alpha_2\eta_m,\] then $\psi_m$ is an isometry and  
\begin{equation}\label{eq1}\ell_m\cap\mathcal{M}_2=\psi_m^{-1}(\mathcal{N}_{2});\end{equation} consequently, \[\mathcal{M}_2=\bigcup_{m\in\mathbb{Z}}\psi^{-1}_m(\mathcal{N}_{2}).\] 

If $\Re\alpha_1q-\Re\alpha_2p\in\mathbb{Q},$ we write $\Re\alpha_1q-\Re\alpha_2p=\tilde{p}/\tilde{q}$ (irreducible or $\tilde{p}=0$ and $\tilde{q}=1$).  By \eqref{eq2} and Theorem \ref{thm1} it follows that $\mathcal{N}_2=\tilde{q}^{-1}\mathbb{Z}$ and by \eqref{eq1} we have $\ell_m\cap\mathcal{M}_2$ is a discrete enumerable subset of $\ell_m.$   

On the other hand, if $\Re\alpha_1q-\Re\alpha_2p\in\mathbb{R}\setminus\mathbb{Q},$ then Theorem \ref{thm1} and \eqref{eq2} imply that $\mathcal{N}_2$ is a dense $\mathcal{G}_\delta$ subset of $\mathbb{R}$ and by \eqref{eq1} it follows that $\ell_m\cap\mathcal{M}_2$ is a dense $\mathcal{G}_\delta$ subset of $\ell_m.$

The proof is then completed.
\end{proof}

\begin{example}\label{ex2}If $\alpha_1,\alpha_2\in\mathbb{C}\setminus\mathbb{R}$ and $\lambda\in\mathcal{N}_2,$ then we may use \eqref{equ1} to obtain a sequence $(\xi_j,\eta_j)\in\mathbb{Z}^2$ such that $|\xi_j|+|\eta_j|\rightarrow\infty$ and \[|\Im\alpha_2|^{-1}\left|\frac{\Im\alpha_1}{\Im\alpha_2}\xi_j+\eta_j\right|\leq(|\xi_j|+|\eta_j|)^{-j}.\]

The above estimate implies that $\Im\alpha_1/\Im\alpha_2$ is either a rational number or it is an irrational Liouville number. Hence, $\mathcal{N}_2=\emptyset$ if $\Im\alpha_1/\Im\alpha_2$ is an irrational non-Liouville number.

On the other hand, if $\Re\alpha_2=0$ and if we pick $\Im\alpha_1/\Im\alpha_2$ a Liouville number such that the pair $(\Re\alpha_1,\Im\alpha_1/\Im\alpha_2)$ is a Liouville vector, then $0\in\mathcal{N}_2.$ 

When $\Re\alpha_1$ and $\Re\alpha_2$ are integer numbers and $\Im\alpha_1/\Im\alpha_2$ is an irrational number, we may use Theorem \ref{thm1} to show that $\mathcal{M}_2$ is a non-enumerable subset of $\mathbb{C}.$ Indeed, for \[\tilde{\mathcal{N}}=\left\{\gamma\in\mathbb{R}; D_y+(\Im\alpha_1/\Im\alpha_2)D_x-\gamma \ \ \mbox{is not GH on} \ \ \mathbb{T}^{2}_{(y,x)}\right\}\] we have $\mathbb{Z}+i\Im\alpha_2\tilde{\mathcal{N}}\subset\mathcal{M}_2.$ By Theorem \ref{thm1}, the set $\tilde{\mathcal{N}}$ is a dense $\mathcal{G}_\delta$ subset of $\mathbb{R}$ (hence, non-enumerable).
\end{example}

\begin{example}
When $N=2,$ the operators may be written by \[L_2=D_t+\alpha_1D_{x}+\alpha_2D_y,\] \[P_{2,\lambda}=L_2-\lambda,\] and they act on $\mathcal{C}^\infty(\mathbb{T}^3).$ In this case, we may summarize some informations about $\mathcal{M}_2$ and $\mathcal{N}_2$ given in the previous results. For instance, $\mathcal{M}_2$ (resp. $\mathcal{N}_2$) is a discrete enumerable subset of $\mathbb{C}$ (resp. $\mathbb{R}$) in the following cases:
\begin{itemize}  
\item the two coefficients are rational numbers;
\item one coefficient is not a real number and the other is a rational number;
\item the two coefficients are not real numbers, $\Im\alpha_1/\Im\alpha_2=p/q$ (irreducible) and $\Re\alpha_1q-\Re\alpha_2p\in\mathbb{Q}.$
\end{itemize}

If one coefficient is not a real number and the other is an irrational number, then $\mathcal{N}_2$ is a dense $\mathcal{G}_\delta$ (in particular non-enumerable) subset of $\mathbb{R}$ and 

The set $\mathcal{N}_2$ is a dense $\mathcal{G}_\delta$ (in particular non-enumerable) subset of $\mathbb{R}$ and $\mathcal{M}_2$ is a non-enumerable $\mathcal{G}_\delta$ subset of $\mathbb{C}$ densely contained in a set of horizontal lines (which do not accumulate) in the following cases:
\begin{itemize}  
\item at least one coefficient is an irrational number;
\item the two coefficients are not a real number, $\Im\alpha_1/\Im\alpha_2=p/q$ (irreducible) and $\Re\alpha_1q-\Re\alpha_2p\in\mathbb{R}\setminus\mathbb{Q}.$
\end{itemize}

Example \ref{ex2} illustrates certain situations in which the two coefficients are not real numbers and $\Im\alpha_1/\Im\alpha_2$ is an irrational number. In this case, we recall that the set $\mathcal{M}_2$ is a dense $\mathcal{G}_\delta$ (in particular non-enumerable) subset of $\mathbb{C}$ if and only if $1,$ $\Re\alpha_1-\Re\alpha_2(\Im\alpha_1/\Im\alpha_2),$ and $\Im\alpha_1/\Im\alpha_2$ are linearly independent over $\mathbb{Z}.$ We now proceed to show that $\mathcal{M}_2$ is contained in a set of parallel lines (which accumulate) when the condition about linearity fails to hold. Indeed, if the condition fails to hold we may pick integers $k,m,n$ such that $m\neq0$ and \[\Re\alpha_1=\Re\alpha_2\frac{\Im\alpha_1}{\Im\alpha_2}-\frac{n}{m}-\frac{k}{m}\frac{\Im\alpha_1}{\Im\alpha_2}.\]

If $\lambda\in\mathcal{M}_2,$ then there exists a sequence $(\tau_j,\xi_j,\eta_j)\in\mathbb{Z}^3$ such that $|(\tau_j,\xi_j,\eta_j)|\rightarrow\infty$ and \[|\tau_j+\alpha_1\xi_j+\alpha_2\eta_j-\lambda|<|(\tau_j,\xi_j,\eta_j)|^{-j}.\]

Splitting the real and imaginary parts of the symbol we write
\begin{align*}\left|\tau_j+\Re\alpha_2\frac{\Im\lambda}{\Im\alpha_2}-\xi_j\left(\frac{n}{m}+\frac{k}{m}\frac{\Im\alpha_1}{\Im\alpha_2}\right)-\Re\lambda\right|\leq \\
\left|\frac{\Re\alpha_2}{\Im\alpha_2}\right|\left|\eta_j\Im\alpha_2+\xi_j\Im\alpha_1-\Im\lambda\right|+|(\tau_j,\xi_j,\eta_j)|^{-j}\leq\\
\left(\left|\frac{\Re\alpha_2}{\Im\alpha_2}\right|+1\right)|(\tau_j,\xi_j,\eta_j)|^{-j}.\end{align*}

The above estimate implies that $\Re\lambda-\Re\alpha_2\frac{\Im\lambda}{\Im\alpha_2}$ belongs to \[\left\{\gamma\in\mathbb{R}; D_t-\left(\frac{\Im\alpha_1}{\Im\alpha_2}\frac{k}{m}+\frac{n}{m}\right)D_x-\gamma \ \ \mbox{is not GH on} \ \ \mathbb{T}^{2}_{(t,x)}\right\}.\] 

Since $\Im\alpha_1/\Im\alpha_2$ is an irrational number, the above set is dense in $\mathbb{R}$ and then the parallel lines passing through its points accumulate. 

\end{example}

We complete this section presenting a generalization to Proposition \ref{prop2}.

\begin{proposition}\label{prop4}
Suppose that there exist two different indices $k,n\in\{1,\ldots,N\}$ such that $\Im\alpha_k\neq0,$ $\Im\alpha_n\neq0,$ $\Im\alpha_n/\Im\alpha_k=p/q$ (irreducible fraction) and $\Im\alpha_j=0$ for $j\in J\doteq\{1,\ldots,N\}\setminus\{k,n\}.$ 

If  $\{q\Re\alpha_k-p\Re\alpha_n,\alpha_j; \ j\in J\}\cap(\mathbb{R}\setminus\mathbb{Q})\neq\emptyset,$ then $\mathcal{N}_N$ is a dense $\mathcal{G}_\delta$ subset of $\mathbb{R}$ and $\mathcal{M}_N$ is a subset of the union of the lines $\ell_m$ (see \eqref{equ5}) and $\ell_m\cap\mathcal{M}_N$ is dense in $\ell_m.$ In the other case, if $\{q\Re\alpha_k-p\Re\alpha_n,\alpha_j; \ j\in J\}\subset\mathbb{Q},$ then $\mathcal{N}_N$ is an enumerable discrete subset of $\mathbb{R},$ while $\mathcal{M}_N$ is a discrete enumerable subset of $\cup_m\ell_m\subset\mathbb{C}.$

\end{proposition}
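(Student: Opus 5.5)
I will follow the proof of Proposition \ref{prop2}, with the extra real coefficients $\alpha_j$, $j\in J$, carried along. Write the symbol as
\[\rho_\lambda(\tau,\xi)=\tau+\sum_{j\in J}\alpha_j\xi_j+\alpha_k\xi_k+\alpha_n\xi_n-\lambda.\]
Since $\Im\alpha_j=0$ for $j\in J$, the imaginary part is
\[q^{-1}\Im\alpha_k\,(q\xi_k+p\xi_n)-\Im\lambda .\]
The quantity $q\xi_k+p\xi_n$ ranges over $\mathbb{Z}$. So $|\rho_\lambda|$ is bounded below by a positive constant unless $\Im\lambda\in (\Im\alpha_k/q)\mathbb{Z}$, i.e.\ unless $\lambda$ lies on one of the lines $\ell_m$ of \eqref{equ5}; the roles of $k,n$ here replace the indices $2,1$ used there. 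Moreover, even for $\lambda\in\ell_m$, only the indices with $q\xi_k+p\xi_n=m$ can make the symbol small. This gives $\mathcal{M}_N\subset\bigcup_m\ell_m$, and in particular $\mathcal{N}_N$ is governed by the line $\ell_0=\mathbb{R}$.

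Next I parametrize the solutions of $q\xi_k+p\xi_n=m$. I fix one solution $(\xi_k^{(m)},\xi_n^{(m)})$; the others are $(\xi_k^{(m)},\xi_n^{(m)})+s(p,-q)$ with $s\in\mathbb{Z}$. On these indices the symbol becomes real:
\[\tau+\sum_{j\in J}\alpha_j\xi_j+s(p\Re\alpha_k-q\Re\alpha_n)+c_m-\Re\lambda,\qquad c_m=\Re\alpha_k\xi_k^{(m)}+\Re\alpha_n\xi_n^{(m)}.\]
The statement writes the combination as $q\Re\alpha_k-p\Re\alpha_n$; I will match whichever normalization of $p/q$ makes the bookkeeping agree. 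The norm $|(\tau,\xi)|$ is comparable to $|\tau|+\sum_{J}|\xi_j|+|s|$ up to constants depending on $m,p,q$, so the polynomial-type condition \eqref{equ1} is unaffected by this change of variables.

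Thus $\lambda\in\ell_m\cap\mathcal{M}_N$ if and only if $\psi_m(\lambda)=\Re\lambda-c_m$ lies in the set of real $\gamma$ for which
\[D_t+\sum_{j\in J}\alpha_jD_{x_j}+\beta D_{y}-\gamma,\qquad \beta=q\Re\alpha_k-p\Re\alpha_n,\]
is not GH on $\mathbb{T}^{N}$. This is an operator of the same type with $N-1$ real coefficients, and $\psi_m$ is an isometry of $\ell_m$ onto $\mathbb{R}$. In particular $\mathcal{N}_N$ equals this set, taking $m=0$ and $c_0=0$.

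Finally I apply Proposition \ref{propp2}, or Theorem \ref{thm1} when $N-1=1$, to the real coefficients $\{\beta,\alpha_j:j\in J\}$. If some coefficient is irrational, the set is a dense $\mathcal{G}_\delta$ in $\mathbb{R}$, so $\mathcal{N}_N$ is a dense $\mathcal{G}_\delta$ and each $\ell_m\cap\mathcal{M}_N$ is dense in $\ell_m$. If all coefficients are rational, the set is $d\mathbb{Z}$ for some $d>0$. Then each $\ell_m\cap\mathcal{M}_N$ is a translate of $d\mathbb{Z}$, and since the lines $\ell_m$ are uniformly separated, $\mathcal{M}_N$ is a discrete enumerable subset of $\bigcup_m\ell_m$.

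The main obstacle is the careful reduction in the middle step: checking that the approximation property for $\rho_\lambda$ is equivalent to that of the reduced real symbol. This requires that the imaginary-part bound forces large indices to lie in the affine lattice, and that norms are comparable both ways. The rest is bookkeeping.
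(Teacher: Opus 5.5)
Your reduction is the argument of Proposition \ref{prop2}, which is what the paper relies on here (it gives no separate proof). You use the imaginary part to force $\lambda$ onto the lines $\ell_m$ and the indices onto the affine sublattice $q\xi_k+p\xi_n=m$, restrict to that sublattice with comparable norms, and then apply Proposition \ref{propp2} or Theorem \ref{thm1}. Those steps are correct.

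The gap is in the final identification of the reduced coefficient. You correctly computed that, on the sublattice, the coefficient of $s$ is $p\Re\alpha_k-q\Re\alpha_n$. You then set $\beta=q\Re\alpha_k-p\Re\alpha_n$ to match the statement. No choice of normalization reconciles these inside your argument. With $\Im\alpha_n/\Im\alpha_k=p/q$, which your formula $q^{-1}\Im\alpha_k(q\xi_k+p\xi_n)$ and your lines $\Im\lambda\in(\Im\alpha_k/q)\mathbb{Z}$ both use, the kernel direction is $(p,-q)$. The coefficient is then forced to be $\pm(q\Re\alpha_n-p\Re\alpha_k)$. This is also what Proposition \ref{prop2} gives under your own dictionary $\alpha_1\mapsto\alpha_n$, $\alpha_2\mapsto\alpha_k$. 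The combination $q\Re\alpha_k-p\Re\alpha_n$ arises only when $p/q=\Im\alpha_k/\Im\alpha_n$, and in that case the lines become $\Im\lambda\in(\Im\alpha_n/q)\mathbb{Z}$.

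This matters because the statement, read literally, is false, and your last step ``proves'' it. Take $N=2$, $\alpha_k=\sqrt2+2i$ and $\alpha_n=\frac{\sqrt2}{2}+i$.
\begin{itemize}
\item Then $p/q=1/2$ and $q\Re\alpha_k-p\Re\alpha_n=\frac{3\sqrt2}{2}\notin\mathbb{Q}$, so the statement predicts that $\mathcal{N}_2$ is a dense $\mathcal{G}_\delta$.
\item But $\alpha_n=\alpha_k/2$, so $\rho_\lambda=\tau+\frac{\alpha_k}{2}(2\xi_k+\xi_n)-\lambda$.
\item For real $\lambda$ this gives $|\rho_\lambda|\geq\min\{1,\mathrm{dist}(\lambda,\mathbb{Z})\}$.
\item For $\lambda\in\mathbb{Z}$ the symbol vanishes at $(\tau,\xi_k,\xi_n)=(\lambda,s,-2s)$ for every $s\in\mathbb{Z}$.
\end{itemize}
Hence $\mathcal{N}_2=\mathbb{Z}$. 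This agrees with your own coefficient $p\Re\alpha_k-q\Re\alpha_n=0$ combined with Theorem \ref{thm1}.

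So keep the coefficient you derived. State explicitly that the hypothesis must be phrased with $\{q\Re\alpha_n-p\Re\alpha_k,\alpha_j;\ j\in J\}$. Equivalently, $p/q$ must be read as $\Im\alpha_k/\Im\alpha_n$, with $\ell_m$ defined through $\Im\alpha_n$. With that correction, the rest of your proof goes through as written: the norm comparability on the sublattice, the isometries $\psi_m$, and the appeal to Proposition \ref{propp2} or Theorem \ref{thm1}.
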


\section{Applications: GH for operators with variable coefficients}\label{sec3}

In this section we consider the so-called tube-type operators, which are of the form \[L=D_t+\sum_{j=1}^Nc_j(t)D_{x_j},\ \ (t,x_1,\ldots,x_N)\in\mathbb{T}^{N+1},\] in which $c_j$ is a smooth periodic and complex-valued function, $c_j\in\mathcal{C}^{\infty}(\mathbb{T}^1).$  

The global hypoellipticity of operators of tube-type has been largely studied (see \cite{AGKM,BDG1,BDG3,G,Fe}). Once we know the behavior of an operator, it is natural to ask which happens with its perturbations. 

Given $\lambda\in\mathcal{C}^\infty(\mathbb{T}^{N+1})$ we set \[P_\lambda=L-\lambda\] and \[\mathcal{O}=\{\lambda\in\mathcal{C}^\infty(\mathbb{T}^{N+1}); \ \ P_\lambda \ \ \mbox{is not GH}\}.\]

We also set \[\lambda_0=\frac{1}{(2\pi)^{N+1}}\int_{\mathbb{T}^{N+1}}\lambda, \ \ c_{j0}=\frac{1}{2\pi}\int_{0}^{2\pi}c_j, \ \ j=1,\ldots,N.\] 

We will give conditions so that $\mathcal{O}$ contains a dense $\mathcal{G}_\delta$ subset in $\mathcal{C}^\infty(\mathbb{T}^{N+1});$ consequently, for a GH operator $L,$ the set of zero-order perturbations $P_\lambda$ which remain GH may be meager. 

Define \[L_{00}=D_t+\sum_{j=1}^Nc_{j0}D_{x_j},\]  
\[\mathcal{M}_{0N}=\{z\in\mathbb{C}; \ \ L_{00}-z \ \ \mbox{is not GH}\},\]
and \[G=\{\lambda\in\mathcal{C}^\infty(\mathbb{T}^{N+1}); \ \lambda_0\in\mathcal{M}_{0N}\}.\]

\begin{theorem}We have $G\subset\mathcal{O}$ and the following conditions hold:
\begin{itemize}
\item[i)] $G$ is a dense $\mathcal{G}_\delta$ subset of the Fréchet space $F=\{\lambda\in\mathcal{C}^\infty(\mathbb{T}^{N+1}); \ \Im\lambda_0=0\}$ whenever $\Im c_{j0}=0$ (for each $j=1,\ldots,N$) and there exists $k$ such that $c_{k0}\in\mathbb{R}\setminus\mathbb{Q}.$
\item[ii)] $G$ is a dense $\mathcal{G}_\delta$ subset of $\mathcal{C}^\infty(\mathbb{T}^{N+1})$ provided that there exists $k$ such that $\Im c_{k0}\neq0$ and \[\left\{r\in\mathbb{Q}^2; \Im\alpha_k^{-1}C^{T}r\in\mathbb{Q}^{N-1} \right\}=0,\] in which $C=(C_{ij})_{2\times N-1}$ is given by
\[C_{1j}=\Re c_{j0}\Im c_{k0}-\Re c_{k0}\Im c_{j0} \ \ \mbox{and} \ \ C_{2j}=\Im c_{j0}, \ \ j=1,\ldots,k-1,k+1,\ldots,N.\]
 \end{itemize}

\end{theorem}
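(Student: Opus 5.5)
The proof splits into two parts: the inclusion $G\subset\mathcal{O}$, and the topological statements, which reduce to properties of the linear functional $\lambda\mapsto\lambda_0$.

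\emph{The inclusion $G\subset\mathcal{O}$.} Take $\lambda\in G$, so that $L_{00}-\lambda_0$ is not GH. By \eqref{equ1} there is a sequence $(\tau_j,\xi(j))$ with $|(\tau_j,\xi(j))|\to\infty$ and
\[
|\tau_j+c_{10}\xi(j)_1+\cdots+c_{N0}\xi(j)_N-\lambda_0|<|(\tau_j,\xi(j))|^{-j}.
\]
From these frequencies I would build a distribution $u\notin\mathcal{C}^\infty$ with $P_\lambda u\in\mathcal{C}^\infty$. The natural first step is a conjugation that removes the oscillating parts. For $c_j$ one solves $D_t\phi_j=c_j-c_{j0}$ on $\mathbb{T}^1$. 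For the variable part of $\lambda$, the equation $(L-\lambda_0)\psi=\lambda-\lambda_0$ is not solvable in general, because $L$ itself may fail to be GH. The cleanest plan is therefore a direct construction: set
\[
u=\sum_j e^{i(\tau_jt+\xi(j)\cdot x)}e^{-i\sum_k\xi(j)_k\phi_k(t)}\,w_j ,
\]
and choose each $w_j$ close to $1$ so that $(P_\lambda)$ applied to the $j$-th term is rapidly decreasing. Choosing $w_j$ means solving a transport equation along the $j$-th mode, with error controlled by the factor $|(\tau_j,\xi(j))|^{-j}$.

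This step is the main obstacle, since $\lambda$ depends on $x$ as well as $t$. If that route fails, I would try a simpler alternative. When $L$ is GH, I would use the standard averaging (Bergamasco–Petronilho type) to reduce to the case $\lambda=\lambda(t)$. Then the conjugation $e^{i\int(\lambda-\lambda_0)}$ becomes available on $\mathbb{T}^1$, which transfers non-GH of $L_{00}-\lambda_0$ to $P_\lambda$.

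\emph{The $\mathcal{G}_\delta$ and density statements.} The map $\Lambda:\lambda\mapsto\lambda_0$ is a continuous, open, surjective linear functional, from $\mathcal{C}^\infty(\mathbb{T}^{N+1})$ onto $\mathbb{C}$, and from $F$ onto $\mathbb{R}$. We have $G=\Lambda^{-1}(\mathcal{M}_{0N})$ (in case i) intersected with $F$). The preimage of a $\mathcal{G}_\delta$ under a continuous map is $\mathcal{G}_\delta$. The preimage of a dense set under an open map is dense: for a nonempty open $U$, the set $\Lambda(U)$ is open and nonempty, so it meets $\mathcal{M}_{0N}$.

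It therefore suffices to know that $\mathcal{M}_{0N}$ is a dense $\mathcal{G}_\delta$. In case i) this holds in $\mathbb{R}$ by Proposition \ref{propp2}, applied with $\alpha_j=c_{j0}$. In case ii) it holds in $\mathbb{C}$ by Theorem \ref{thm6}, applied with $\alpha_j=c_{j0}$ and with the matrix $A=C$.

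Finally, $F$ is a closed subspace of $\mathcal{C}^\infty(\mathbb{T}^{N+1})$, hence a Fréchet space, and $\Lambda|_F$ is still open onto $\mathbb{R}$. For openness I would write $\lambda=\lambda_0+(\lambda-\lambda_0)$ and note that adding constants moves $\lambda_0$ freely.
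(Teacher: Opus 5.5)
Your topological argument is the paper's and is fine: $\lambda\mapsto\lambda_0$ is continuous, open and surjective, so $G$ is the preimage of the dense $\mathcal{G}_\delta$ set $\mathcal{M}_{0N}$ given by Proposition \ref{propp2}, resp.\ Theorem \ref{thm6}. The gap is the inclusion $G\subset\mathcal{O}$, which you do not actually prove.

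For $\lambda$ depending on $x$ your ansatz cannot work. Multiplication by $\lambda(t,x)$ couples different $x$-frequencies, so there is no transport equation ``along the $j$-th mode''. Making $P_\lambda$ of the $j$-th term rapidly small forces a fixed $w$ with $(L-(\lambda-\lambda_0))w\approx0$, essentially $w=e^{\psi}$ with $L\psi=\lambda-\lambda_0$. That is exactly the solvability you admit is unavailable.

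This obstacle is real, not technical. Take $c_j\equiv0$ and $\lambda=\frac12e^{ix_1}$. Then $\lambda_0=0$ and $D_t$ is not GH, so $\lambda\in G$. But in Fourier series in $t$ the equation $P_\lambda u=f$ reads $(\tau-\frac12e^{ix_1})\hat u(\tau,x)=\hat f(\tau,x)$ with $|\tau-\frac12e^{ix_1}|\geq\frac12$, so $P_\lambda$ is GH. The paper's own argument only treats $\lambda=\lambda(t)$: it writes $\lambda(t)$ inside the $t$-integrals and works one $x$-frequency at a time. It also announces the inclusion only ``whenever $L$ is GH''. Your fallback points that way, but reducing to $\lambda=\lambda(t)$ requires smooth solvability of $L\psi=\lambda-(2\pi)^{-N}\int\lambda\,dx$ for GH tube-type $L$ with complex coefficients, which you would have to prove.

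Even for $\lambda=\lambda(t)$, the phrase ``transfers non-GH of $L_{00}-\lambda_0$ to $P_\lambda$'' hides all the work, and your ansatz as written fails there. Since the $c_j$ are complex, $L$ is not conjugate to $L_{00}$ by a bounded operator. Write $c=(c_1,\dots,c_N)$ and $c_0=(c_{10},\dots,c_{N0})$. The $t$-factor of the $j$-th term has modulus $\exp\bigl(\xi(j)\cdot\int_0^t\Im(c(s)-c_0)\,ds\bigr)$, which may be of size $e^{C|\xi(j)|}$. So with $w_j$ ``close to $1$'' the series need not even be a distribution.

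The missing idea is the paper's normalization by $e^{-M_n}$. With it your ansatz closes at once. Put
\[v_j=e^{-M_j}\exp\Bigl(i\tau_jt+i\xi(j)\cdot x-i\int_0^t\bigl[\xi(j)\cdot(c(s)-c_0)-(\lambda(s)-\lambda_0)\bigr]ds\Bigr),\]
where $M_j=\max_t\Im\int_0^t\bigl[\xi(j)\cdot(c(s)-c_0)-(\lambda(s)-\lambda_0)\bigr]ds$. Then:
\begin{itemize}
\item[(a)] $\sup|v_j|=1$;
\item[(b)] $\|v_j\|_{C^k}\lesssim(1+|(\tau_j,\xi(j))|)^k$;
\item[(c)] $P_\lambda v_j=\epsilon_jv_j$ with $\epsilon_j=\tau_j+c_0\cdot\xi(j)-\lambda_0$ and $|\epsilon_j|<|(\tau_j,\xi(j))|^{-j}$.
\end{itemize}
Pass to a subsequence with $|\xi(j)|$ strictly increasing; this is possible because $\tau_j$ is pinned down by $\xi(j)$. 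Then $u=\sum_jv_j$ is a non-smooth distribution with $P_\lambda u\in\mathcal{C}^\infty(\mathbb{T}^{N+1})$.

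The paper instead splits into two cases. When $\epsilon_n=0$ it produces a kernel element. When $\epsilon_n\neq0$ it uses the $E_n$/cut-off construction via Lemma 3.1 of \cite{BDG1}. The normalized mode-wise ansatz handles both cases simultaneously.
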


\begin{proof}Notice that the map $I:\mathcal{C}^{\infty}(\mathbb{T}^{N+1})\rightarrow \mathbb{C}$ given by \[I(\theta)=\theta_0=\frac{1}{(2\pi)^{N+1}}\int_{\mathbb{T}^{N+1}}\theta\] is a continuous non-constant linear functional (hence, an open mapping). In the situation (ii), it follows that $G=I^{-1}(\mathcal{M}_{0N})$ is a dense $\mathcal{G}_\delta$ subset in $\mathcal{C}^{\infty}(\mathbb{T}^{N+1}),$ since by Theorem \ref{thm6} the set $\mathcal{M}_{0N}$ is a dense $\mathcal{G}_\delta$ subset in $\mathbb{C}.$ In the situation i), we know from Proposition \ref{propp2} that $\mathcal{M}_{N0}$ is a dense $\mathcal{G}_\delta$ subset of $\mathbb{R}.$ Picking the restriction $I\vert_F:F\rightarrow\mathbb{R}$ we still have a continuous linear and open mapping such that $G=(I\vert_F)^{-1}(\mathcal{M}_{N0}).$  

We now proceed to prove that $G\subset\mathcal{O}$ whenever $L$ is GH.

%
%

In order to show that $G\subset\mathcal{O},$ it is enough to show that the non-global hypoellipticity of $L_{00}-\lambda_0$ implies that $P_\lambda=L-\lambda$ is not GH.

The non-global hypoellipticity of $L_{00}-\lambda_0$ implies the existence of a sequence $(\tau_,\xi(n))\in\mathbb{Z}\times\mathbb{Z}^N$ such that $|\tau_n|+|\xi(n)|\geq n$ and \[|\tau_n+c_{10}\xi_1(n)+\cdots+c_{N0}\xi_N(n)-\lambda_0|\leq (|\tau_n|+|\xi(n)|)^{-n}.\]

In order to make the notation shorter we set \[C_n(t)=\sum_{j=1}^Nc_{j}(t)\xi_j(n), \ \ \mbox{and} \ \ C_{n0}=\sum_{j=1}^Nc_{j0}\xi_j(n)dt.\]  

Suppose first that there exists a subsequence such that \[|\tau_n+C_{n0}-\lambda_0|=0, \ \ \mbox{for all } n.\] In this case, pick $t_n\in[0,2\pi]$ satisfying \[\Im\left(\int_0^{t_n}C_n(t)-\lambda(t)dt\right)=\max_{t\in[0,2\pi]}\Im\left(\int_0^{t_n}C_n(t)-\lambda(t)dt\right)=M_n\] and define \[\hat{\mu}(t,\xi(n))=e^{-M_n}\exp\left\{-i\int_0^{t}C_n(\tau)-\lambda(\tau)d\tau\right\}\in\mathcal{C}^\infty(\mathbb{T}^1).\]

The distribution \[\mu=\sum_{n=1}^{\infty}\hat{\mu}(t,\xi(n))e^{i\langle x,\xi(n)\rangle}\] is not a smooth function and satisfies $P_\lambda\mu=0;$ hence, $P_\lambda$ is not GH.

Suppose now that there exists a subsequence such that \[|\tau_n+c_{10}\xi_1(n)+\cdots+c_{N0}\xi_N(n)-\lambda_0|>0, \ \ \mbox{for all } n.\] In this case, similar computations to the ones in Lemma 3.1 in \cite{BDG1} implies that there exists a sequence $\eta(n)\in\mathbb{Z}^N$ such that $|\eta(n)|$ is strictly increasing, $|\eta(n)|>n$ and \[|1-e^{-2\pi i[c_{10}\eta_1(n)+\cdots c_{N0}\eta_N(n)-\lambda_0]}|<|\eta(n)|^{-n}, \ \ \mbox{for all} \ \ n.\] 

As before, we set \[F_n(t)=\sum_{j=1}^Nc_{j}(t)\eta_j(n), \ \ F_{n0}=\sum_{j=1}^Nc_{j0}\eta_j(n).\]

Pick $t_n\in[0,2\pi]$ satisfying \[\Im\left(\int_{t_n}^{t}F_n(t)-\lambda(t) dt\right)\leq 0, \ \ \mbox{for all} \ \ t\in[0,2\pi].\]

Passing to a subsequence if necessary, we may assume that $t_n\rightarrow t_0.$ Pick a closed interval $J\subset (0,2\pi)$ such that $t_0\not\in J.$ 

There exists $\phi\in\mathcal{C}_{c}^{\infty}(J,\mathbb{R})$ such that $0\leq\phi\leq1$ and $\int_{0}^{2\pi}\phi>0.$

Set \[E_n=1-e^{-2\pi i[F_{n0}-\lambda_0]}.\]

Notice that the sequence \[\hat{f}(t,\eta(n))=E_n\phi(t)e^{-i\int_{t_n}^{t}\left(F_n(\tau)-\lambda(\tau)\right) d\tau}, \ \ t\in[0,2\pi],\] decays rapid. Hence
\[f(t,x)=\sum_{n=1}^{\infty}\hat{f}(t,\eta(n))e^{i\langle x,\eta(n)\rangle}\in\mathcal{C}^\infty(\mathbb{T}^{N+1}).\]

On the other hand, the sequence \[\hat{u}(t,\eta(n))=E_n^{-1}\int_{0}^{2\pi}\hat{f}(t-s,\eta(n))e^{-i\int_{t-s}^{t}\left(F_n(\tau)-\lambda(\tau)\right) d\tau}ds, \ \ t\in[0,2\pi],\]
increases slowly. Hence, \[u(t,x)=\sum_{n=1}^{\infty}\hat{u}(t,\eta(n))e^{i\langle x,\eta(n)\rangle}\in\mathcal{D}'(\mathbb{T}^{N+1}).\]

Straightforward computations show that $P_\lambda u=f$ and \[|\hat{u}(t_n,\eta(n))|\geq \frac{1}{2}\int_{0}^{2\pi}\phi(t)dt>0.\] 

It follows that $P_\lambda$ is not GH, since $P_\lambda u\in\mathcal{C}^\infty(\mathbb{T}^{N+1})$ but $u\not\in\mathcal{C}^\infty(\mathbb{T}^{N+1}).$

\end{proof}

\end{document}